\setlist[enumerate]{nosep}
\definecolor{labelkey}{rgb}{0,0.08,0.45}
\definecolor{refkey}{rgb}{0,0.6,0.0}
\definecolor{Brown}{rgb}{0.45,0.0,0.05}
\definecolor{lime}{rgb}{0.00,0.8,0.0}
\definecolor{lblue}{rgb}{0.5,0.5,0.99}
\colorlet{hlcyan}{cyan!30}
\def\namedlabel#1#2{\begingroup
   \def\@currentlabel{#2}%
   \label{#1}\endgroup
}
\newcommand{\seppfour}{\setlength{\itemsep}{-4pt}}
\newcommand{\bx}{\ensuremath{\mathbf{x}}}
\newcommand{\bzero}{\ensuremath{{\boldsymbol{0}}}}
\newcommand{\weakly}{\ensuremath{\:{\rightharpoonup}\:}}
\newcommand{\nnn}{\ensuremath{{n\in{\mathbb N}}}}
\newcommand{\thalb}{\ensuremath{\tfrac{1}{2}}}
\newcommand{\menge}[2]{\big\{{#1}~\big |~{#2}\big\}}
\newcommand{\fenv}[1]%
{\ensuremath{\,\overrightarrow{\operatorname{env}}_{#1}}}
\newcommand{\benv}[1]%
{\ensuremath{\,\overleftarrow{\operatorname{env}}_{#1}}}
\newcommand{\scal}[2]{\left\langle{#1},{#2}  \right\rangle}
\newcommand{\tscal}[2]{\langle{#1},{#2}\rangle}
\newcommand{\exi}{\ensuremath{\exists\,}}
\newcommand{\zeroun}{\ensuremath{\left]0,1\right[}}
\newcommand{\RR}{\ensuremath{\mathbb R}}
\newcommand{\RP}{\ensuremath{\mathbb{R}_+}}
\newcommand{\RM}{\ensuremath{\mathbb{R}_-}}
\newcommand{\NN}{\ensuremath{\mathbb N}}
\newcommand{\dom}{\ensuremath{\operatorname{dom}}}
\newcommand{\bd}{\ensuremath{\mathbf{d}}}
\newcommand{\ran}{\ensuremath{{\operatorname{ran}}\,}}
\newcommand{\Fix}{\ensuremath{\operatorname{Fix}}}
\newcommand{\Id}{\ensuremath{\operatorname{Id}}}
\newcommand{\bX}{\ensuremath{{\mathbf{X}}}}
\newcommand{\bC}{\ensuremath{{\mathbf{C}}}}
\newcommand{\bQ}{\ensuremath{{\mathbf{Q}}}}
\newcommand{\bP}{\ensuremath{{\mathbf{P}}}}
\newcommand{\bR}{\ensuremath{{\mathbf{R}}}}
\newcommand{\bA}{\ensuremath{{\mathbf{A}}}}
\newcommand{\bT}{\ensuremath{{\mathbf{T}}}}
\newcommand{\bZ}{\ensuremath{{\mathbf{Z}}}}
\newcommand{\bD}{\ensuremath{{\boldsymbol{\Delta}}}}
\newcommand{\bB}{\ensuremath{{\mathbf{B}}}}
\newcommand{\bc}{\ensuremath{\mathbf{c}}}
\newcommand{\by}{\ensuremath{\mathbf{y}}}
\newcommand{\be}{\ensuremath{\mathbf{e}}}
\newcommand{\bz}{\ensuremath{\mathbf{z}}}
\crefname{lemma}{lemma}{lemmas}
\crefname{equation}{}{equations}
\crefname{figure}{Figure}{Figures}
\crefname{chapter}{Appendix}{chapters}
\crefname{item}{}{items}
\crefname{enumi}{}{}
\newtheorem{theorem}{Theorem}[section]
\newtheorem{corollary}[theorem]{Corollary}
\newtheorem{proposition}[theorem]{Proposition}
\newtheorem{fact}[theorem]{Fact}
\newtheorem{remark}[theorem]{Remark}
\newcommand{\bv}{\ensuremath{{\mathbf{v}}}}
\providecommand{\RR}{\mathbb{R}}
\providecommand{\ran}{\operatorname{ran}}
\providecommand{\dom}{\operatorname{dom}}
\providecommand{\epi}{\operatorname{epi}}
\providecommand{\gra}{\operatorname{gra}}
\providecommand{\Id}{\operatorname{{ Id}}}
\providecommand{\NN}{\mathbb{N}}
\providecommand{\ran}{\operatorname{ran}}
\providecommand{\rec}{\operatorname{rec}}
\providecommand{\Id}{\operatorname{Id}}
\providecommand{\RR}{\mathbb{R}}
\providecommand{\NN}{\mathbb{N}}
\definecolor{myblue}{rgb}{.8, .8, 1}
  \newcommand*\mybluebox[1]{%
    \colorbox{myblue}{\hspace{1em}#1\hspace{1em}}}
\begin{document}

\title{\textsc{
The difference vectors for convex sets and a resolution of the geometry conjecture}}
\author{
Salihah Alwadani\thanks{
Mathematics, University
of British Columbia,
Kelowna, B.C.\ V1V~1V7, Canada. E-mail:
\texttt{saliha01@mail.ubc.ca}.},~ 
Heinz H.\ Bauschke\thanks{
Mathematics, University
of British Columbia,
Kelowna, B.C.\ V1V~1V7, Canada. E-mail:
\texttt{heinz.bauschke@ubc.ca}.},~
Julian P.\ Revalski\thanks{
Institute of Mathematics and Informatics,
Bulgarian Academy of Sciences, 
Acad.\ G.\ Bonchev str., Block~8, 
1113~Sofia, Bulgaria. E-mail: 
\texttt{revalski@math.bas.bg}.},
~and~
Xianfu Wang\thanks{
Mathematics, University
of British Columbia,
Kelowna, B.C.\ V1V~1V7, Canada. E-mail:
\texttt{shawn.wang@ubc.ca}.}
}

\date{May 28, 2021 (version accepted for publication)}
\maketitle

\vskip 8mm

\begin{abstract} 
The geometry conjecture, which  was posed nearly 
a quarter of a century ago, states 
that the fixed point
set of the composition of projectors onto nonempty closed convex 
sets in Hilbert space is actually equal to the intersection of 
certain translations of the underlying sets. 

In this paper, we provide a complete resolution of 
the geometry conjecture. Our proof relies on 
monotone operator theory. We revisit previously known results
and provide various illustrative examples.
Comments on the numerical computation of the quantities involved 
are also presented. 
\end{abstract}

{\small
\noindent
{\bfseries 2020 Mathematics Subject Classification:}
{Primary 47H09; Secondary 47H05, 47H10, 90C25
}

\noindent {\bfseries Keywords:}
Attouch-Th\'era duality,
circular right shift operator, 
convex sets, 
cycle, 
fixed point set,
monotone operator theory, 
projectors
}

\section{Introduction}

\subsection{Fixed points of compositions of projectors}
Throughout,
\begin{empheq}[box=\mybluebox]{equation}
  \text{$X$ is a real Hilbert space}
\end{empheq}
and
\begin{empheq}[box=\mybluebox]{equation}
\text{$C_1,\ldots,C_m$ are nonempty closed convex subsets of $X$,}
\end{empheq}
with projectors 
$P_{C_1},\ldots,P_{C_m}$ which we also write more simply as 
$P_1,\ldots,P_m$, 
and with $m\in\{2,3,\ldots,\}$. 
We define the fixed point sets of the cyclic compositions by 
\begin{empheq}[box=\mybluebox]{equation}
\label{e:defF_i}
  F_m := \Fix(P_m\cdots P_1), 
  F_{m-1} := \Fix(P_{m-1}\cdots P_1P_m),
  \ldots,
  F_1 := \Fix(P_1P_m\cdots P_2).
\end{empheq}
Compositions of projectors are often employed in projection methods.
This is a vast area which we will not summarize here; however, we 
refer the reader to \cite{CenZak} as a starting point as well as 
the very recent paper \cite{ComPes20}. 

\subsection{The geometry conjecture, difference vectors, and cycles}
\label{ss:geocon}

The \textbf{geometry conjecture}, formulated first in 1997
(see \cite[Conjecture~5.1.6]{BBL}), states that 
there exists a list of vectors
$v_1,\ldots,v_m$ in $X$ such that 
\begin{equation}
\label{e:perp}
v_1+v_2+\cdots+v_m = 0
\end{equation}
and 
\begin{equation}
\label{e:Fm}
F_m = C_m \cap (C_{m-1}+v_{m-1}) \cap \cdots \cap 
(C_1+v_{1}+\cdots+v_{m-1}),
\end{equation}
and analogously for $F_{m-1},\ldots,F_1$. 
These vectors form the tuple $(v_1,\ldots,v_m)$ of \textbf{difference vectors} 
and they are sometimes also referred to as displacement vectors or gap vectors. 

This conjecture is known to be true when 
$m=2$ or $C_1\cap C_2 \cap \cdots \cap C_m\neq\varnothing$;
see \cite[Subsection~5.1]{BBL}. 
The following is known when  all sets $F_i$ are nonempty: 
let $f_1\in F_1$, and set $f_2 := P_2f_1, f_3 := P_3f_2, \ldots,
f_{m} := P_{m}f_{m-1}$; 
we shall refer to the tuple $(f_1,\ldots,f_m)$ as a \textbf{cycle}. 
(Cycles are of interest even when $C_1,\ldots,C_m$ are all hyperplanes 
--- see \cite[Chapter~8]{ByrneSP} and \cite[Chapter~50]{ByrneAIM}.)
Setting 
\begin{equation}
\label{e:cheatcycle}
  v_1 = f_2-f_1,\; v_2 = f_3-f_2,\; \ldots,\; v_{m-1} = f_m-f_{m-1},\; v_m = f_1-f_m, 
\end{equation}
which turns out to be independent of the cycle chosen, 
makes \cref{e:perp} true and yields 
``one half'' of \cref{e:Fm}, namely: 
$F_m \subseteq C_m \cap (C_{m-1}+v_{m-1}) \cap \cdots \cap 
(C_1+v_{1}+\cdots+v_{m-1})$ 
and analogously for $F_{m-1},\ldots,F_1$
(see \cite[Theorem~5.1.2]{BBL}).
However, this description is not fully satisfying --- 
it is only \emph{implicit} in the sense 
it was not known 
what the difference vectors are when the fixed point sets $F_i$ are empty. 
The sole exception to this mystery was the case when $m=2$ which allowed 
for the \emph{explicit} description of the 
two difference vectors by 
\begin{equation}
\label{e:200805d}
  P_{\overline{C_2-C_1}}(0), P_{\overline{C_1-C_2}}(0); 
\end{equation}
see \cite[Lemmas~2.1 and 2.3]{02.pdf}. Note 
that this description is \emph{not} based on the fixed point sets $F_1,F_2$.  
In this particular case, these fixed point set have the beautiful 
description (see \cite[Theorem~2]{CG})
\begin{equation}
  F_1 = \menge{x\in C_1}{d_{C_2}(x)=\inf\|C_1-C_2\|},
  F_2 = \menge{x\in C_2}{d_{C_1}(x)=\inf\|C_1-C_2\|};
\end{equation}
moreover, the cycles $(f_1,f_2)$ are precisely the minimizers 
of the bivariate function 
\begin{equation}
  X\times X\to\RR\colon (x_1,x_2)\mapsto \|x_1-x_2\| + \iota_{C_1}(x_1)+\iota_{C_2}(x_2),
\end{equation}
where $d_S$ and $\iota_{S}$ denote the distance and indicator function of a subset $S$ of $X$, 
respectively.  
(See \cite{CG}, \cite{02.pdf}, \cite{01.pdf}, and \cite{BBL} for much more on the case 
when $m=2$.)
A referee also pointed out that when $m=2$ and $F_1=F_2=\varnothing$ one cannot expect 
uniqueness of the difference vectors as one may simply separate the sets even further.

The case when $m\geq 3$ is very interesting:
The \emph{negative} result of 
Baillon, Combettes, and Cominetti (see \cite[Theorem~2.3]{BCC}) 
states that when $X$ is at least two-dimensional, then 
there is \emph{no} function $\varphi$ such that 
the cycles are precisely the minimizers of the function 
$\varphi(x_1,\ldots,x_m)+\iota_{C_1}(x_1)+\cdots + \iota_{C_m}(x_m)$.
(When $m=2$, we can pick $\varphi(x_1,x_2)=\|x_1-x_2\|$ or even the 
differentiable function $\varphi(x_1,x_2)=\thalb\|x_1-x_2\|^2$. 
For results on underrelaxed projectors, see also \cite{BCC+} and \cite{CRW}.)
Even when cycles exist, the ``meaning'' of the distance vector was
not understood. 

\subsection{Aim and outline of this paper}

\emph{The aim of this paper is to settle the geometry 
conjecture in the affirmative.} 
The resolution depends on key results from monotone operator theory and yields 
a formula for the difference vectors. 

The remainder of this paper is organized as follows. 
In \cref{s:aux}, we reformulate cycles and difference vectors 
in a product space using Attouch-Th\'era duality. 
The proof of the geometry conjecture is then presented in \cref{s:main} 
(see \cref{t:main}). The cases $m=2,m=3$ are investigated in 
\cref{s:m=2,s:m=3}. Numerical considerations are presented in 
\cref{s:Banach,s:fb}. The paper concludes with a summary 
and perspectives for future work in \cref{s:done}.

Notation is largely from \cite{BC2017} to which we also refer for 
background material on projections, convex analysis, and monotone 
operator theory. For valuable references on monotone operator theory 
see, e.g., \cite{Brezis}, \cite{BurIus}, \cite{Simons1}, and \cite{Simons2}.

\section{The displacement of the circular right shift operator}

\label{s:aux}

\subsection{Product space and Attouch-Th\'era duality}

From now on, we will also work in the product space 
\begin{empheq}[box=\mybluebox]{equation}
  \bX := X^m
\end{empheq}
in which we set 
\begin{empheq}[box=\mybluebox]{equation}
  \bC := C_1\times \cdots \times C_m
\;\;\text{and}\;\;
  \bD := \menge{(x,\ldots,x)\in \bX}{x\in X}.
\end{empheq}
It is well known that the projectors onto these sets are given by 
\begin{equation}
  P_{\bC}(x_1,\ldots,x_m) = \big(P_1x_1,\ldots,P_mx_m\big)
\end{equation}
and 
\begin{equation}
  P_{\bD}(x_1,\ldots,x_m) = \frac{1}{m}\Big(\sum_{i=1}^mx_i,\ldots,\sum_{i=1}^mx_i\Big)
\end{equation}
respectively (see, e.g., \cite[Proposition~29.4 and Proposition~26.4(iii)]{BC2017}). 
Next, we define the circular right-shift operator 
\begin{empheq}[box=\mybluebox]{equation}
\label{e:defofR}
  \bR \colon \bX\to\bX\colon (x_1,x_2,\ldots,x_m)
  \mapsto (x_m,x_1,x_2,\ldots,x_{m-1}).
\end{empheq}
Recall (see \cref{ss:geocon}) that 
$\bz = (z_1,\ldots,z_m)\in\bX$ 
is a cycle 
if $z_1=P_1z_m$, $z_2=P_2z_1$, 
\ldots, $z_m = P_mz_{m-1}$, which 
can be elegantly reformulated in $\bX$ as the fixed point equation 
\begin{equation}
\label{e:200705c}
  \bz = P_{\bC}(\bR\bz).
\end{equation}
Denote the (possibly empty) \textbf{set of all cycles} by 
\begin{empheq}[box=\mybluebox]{equation}
  \bZ := \Fix(P_\bC\bR).
\end{empheq}
In passing, we note that if $Q_i\colon (x_1,\ldots,x_m)\to x_i$, 
then $F_i = Q_i(\bZ)$. 
Because $P_\bC = (\Id+N_\bC)^{-1}$, where $N_\bC$ denotes the normal cone operator of $\bC$, 
it follows that 
\cref{e:200705c} is equivalent to 
$\bR\bz\in(\Id+N_\bC)(\bz)$ and to 
\begin{equation}
\label{e:ATprimal}
0\in N_{\bC}(\bz) + (\Id-\bR)(\bz).
\end{equation}
We view this last inclusion sum problem
as primal (Attouch-Th\'era) problem 
for the pair $(N_\bC,\Id-\bR)$. 
(See \cite{AtTh} and \cite{BBHM} for more on Attouch-Th\'era duality.)
In view of the linearity of $\bR$, 
the Attouch-Th\'era dual problem simplifies to 
\begin{equation}
\label{e:ATdual}
0 \in N_{\bC}^{-1}(\by) + (\Id-\bR)^{-1}(\by).
\end{equation}
If $\bz$ is any cycle; equivalently, a solution to the primal problem 
\cref{e:ATprimal}, 
then a direct computation (or \cite[Proposition~2.4(iii)]{BBHM}) shows that 
$N_{\bC}(\bz)\cap -(\Id-\bR)(\bz)$ is a nonempty subset of dual solutions. 
Even better, both $N_{\bC}$ and $\Id-\bR$ are \emph{paramonotone} 
in the sense of Iusem \cite{Iusem}
by, e.g., \cite[Example~22.4(i) and Example~22.9]{BC2017}. 
It thus follows from \cite[Theorem~5.3]{BBHM} that 
\begin{equation}
\label{e:cycletodual}
(\forall \bz\in\bZ)\quad 
\bR\bz-\bz \text{~is the \emph{unique} solution of \cref{e:ATdual}}
\end{equation}
and that 
\begin{equation}
\label{e:dualtocycles}
\text{if $\by$ solves \cref{e:ATdual}, then~~}
\bZ =  N_\bC^{-1}(\by)\cap -(\Id-\bR)^{-1}(\by)\neq \varnothing
\end{equation}

\subsection{$(\Id-\bR)^{-1}$ and the skew operator $\bT$}

Recall the definition of the circular right shift operator $\bR$ 
(see \cref{e:defofR}). 
By \cite[Proposition~2.4]{ABRW}, we have 
\begin{equation}
\label{e:JulianPbD}
  P_\bD = \frac{1}{m}\sum_{k=0}^{m-1}\bR^k.
  \end{equation}
Now define 
\begin{empheq}[box=\mybluebox]{equation}
\label{e:defofT}
\bT = \frac{1}{2m}\sum_{k=1}^{m-1}(m-2k)\bR^k, 
\end{empheq}
which is a \emph{skew} (hence maximally monotone) linear operator on $\bX$, i.e., 
\begin{equation}
\label{e:Tskew}
\bT^*=-\bT
\end{equation} 
with 
\begin{equation}
\label{e:ranT}
\ran\bT\subseteq \bD^\perp
\end{equation}
(see \cite[Proposition~3.2(ii)\&(iii)]{ABRW}). 
Then 
\cite[Theorem~3.3]{ABRW} states that 
\begin{equation}
\label{e:Id-Rinv}
(\Id-\bR)^{-1} = \thalb \Id + N_{\bD^\perp}+\bT. 
\end{equation}
This form of $(\Id-\bR)^{-1}$ makes it clear that this operator 
is \emph{strongly monotone} with constant $\thalb$ which implies that 
\begin{equation}
\text{the dual problem \cref{e:ATdual} has \emph{at most} one solution}
\end{equation}
which is consistent with \cref{e:cycletodual}.
(A feature of Attouch-Th\'era duality is that either both primal 
and dual have solutions or they both don't. 
It is possible that there is no cycle and hence no dual solution;
see \cref{sss:nocycles}.) 

We now collect some useful identities.

\begin{proposition}
\label{p:200715a}
We have $P_\bD\bR = \bR P_\bD = P_\bD$ and 
hence $P_{\bD^\perp}\bR = \bR - P_\bD$.
\end{proposition}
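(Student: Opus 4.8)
The plan is to deduce both identities directly from the spectral-type expansion $P_\bD = \frac{1}{m}\sum_{k=0}^{m-1}\bR^k$ recorded in \cref{e:JulianPbD}, together with the single observation that the circular right shift has period $m$, i.e., $\bR^m = \Id$. The latter is immediate from the definition \cref{e:defofR}: applying $\bR$ exactly $m$ times returns each coordinate to its original position.

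First I would compute $\bR P_\bD$. Multiplying the expansion on the left by $\bR$ and reindexing the sum gives $\bR P_\bD = \frac{1}{m}\sum_{k=0}^{m-1}\bR^{k+1} = \frac{1}{m}\sum_{j=1}^{m}\bR^{j}$; replacing the wrap-around term $\bR^{m}$ by $\Id = \bR^{0}$ returns $\frac{1}{m}\sum_{j=0}^{m-1}\bR^{j} = P_\bD$. The identity $P_\bD \bR = P_\bD$ follows from the same reindexing applied on the right; alternatively, since $\bR$ is an orthogonal operator (a coordinate permutation) and $P_\bD$ is self-adjoint, one may take adjoints in $\bR^{-1}P_\bD = P_\bD$, noting that $\bR^{-1} = \bR^{m-1}$ is again a power of $\bR$ and that the left-hand computation already yields $\bR^{\ell} P_\bD = P_\bD$ for every power. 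Either route shows that both $\bR P_\bD$ and $P_\bD \bR$ collapse to $P_\bD$.

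For the concluding identity I would simply substitute $P_{\bD^\perp} = \Id - P_\bD$, so that $P_{\bD^\perp}\bR = \bR - P_\bD\bR = \bR - P_\bD$, invoking the first identity just established.

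There is no genuine obstacle here: the statement is a two-line consequence of \cref{e:JulianPbD} and $\bR^{m} = \Id$. The only point deserving a word of care is the reindexing step, where the wrap-around term $\bR^{m}$ must be correctly identified with $\Id$. A purely geometric alternative is also available, bypassing \cref{e:JulianPbD} entirely: $\bR$ fixes $\bD$ pointwise (it maps $(x,\ldots,x)$ to itself) and is an isometry, hence $\bR$ leaves both $\bD$ and $\bD^\perp$ invariant, commutes with $P_\bD$, and acts as the identity on $\ran P_\bD = \bD$; this forces $\bR P_\bD = P_\bD$ and, by self-adjointness of $P_\bD$, also $P_\bD\bR = P_\bD$.
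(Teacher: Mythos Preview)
Your proof is correct and follows essentially the same route as the paper: the paper uses \cref{e:JulianPbD} to note that $P_\bD$ is a polynomial in $\bR$ (hence $P_\bD\bR=\bR P_\bD$), then performs the same reindexing $\bR P_\bD=\tfrac{1}{m}\sum_{k=0}^{m-1}\bR^{k+1}=\tfrac{1}{m}\sum_{k=1}^{m}\bR^{k}=P_\bD$ via $\bR^m=\Id$, and concludes with $P_{\bD^\perp}\bR=(\Id-P_\bD)\bR=\bR-P_\bD$. Your additional geometric remark is a pleasant bonus but not needed.
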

\begin{proof}
Recalling \cref{e:JulianPbD}, we observe that 
$P_\bD \bR = \bR P_\bD$. 
Furthermore, 
\begin{align}
\bR P_\bD = \frac{1}{m}\sum_{k=0}^{m-1}\bR^{k+1}
= \frac{1}{m}\sum_{k=1}^{m}\bR^{k}
= \frac{1}{m}\sum_{k=0}^{m-1}\bR^{k}
= P_\bD
\end{align}
because $\bR^m=\bR^0=\Id$.
It follows that 
$P_{\bD^\perp}\bR = 
(\Id-P_\bD)\bR = \bR - P_\bD$ .
\end{proof}

For the remainder of this section, let us abbreviate
\begin{empheq}[box=\mybluebox]{equation}
\label{e:localQ}
\bQ := \frac{1}{m} \sum_{k=1}^{m-1}k \bR^k.
\end{empheq}
Clearly, $\bQ$ commutes with $\bR$, and hence also with $P_\bD$ 
by \cref{e:JulianPbD}. 

\begin{proposition}
\label{p:200715b}
$2\bQ P_\bD = (m-1)P_\bD$. 
\end{proposition}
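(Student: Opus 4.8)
The plan is to exploit the absorption property of $P_\bD$ under the shift operator. \Cref{p:200715a} gives $\bR P_\bD = P_\bD$, and a trivial induction (multiplying on the left by $\bR$ repeatedly) upgrades this to $\bR^k P_\bD = P_\bD$ for every $k\in\{0,1,\ldots,m-1\}$. This is really the crux: once every power of $\bR$ collapses to the identity after being applied to $P_\bD$, the operator $\bQ$ degenerates into a scalar multiple of $P_\bD$.

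Concretely, I would substitute the definition \cref{e:localQ} of $\bQ$ and push the factor $P_\bD$ through the sum, using $\bR^k P_\bD = P_\bD$ termwise:
\begin{equation}
\bQ P_\bD = \frac{1}{m}\sum_{k=1}^{m-1} k\,\bR^k P_\bD
= \frac{1}{m}\Big(\sum_{k=1}^{m-1} k\Big) P_\bD.
\end{equation}
The remaining step is the elementary Gauss summation $\sum_{k=1}^{m-1}k = \tfrac{m(m-1)}{2}$, which yields $\bQ P_\bD = \tfrac{m-1}{2}P_\bD$, and multiplying by $2$ gives the claim $2\bQ P_\bD = (m-1)P_\bD$.

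I expect no serious obstacle here; the statement is essentially a bookkeeping consequence of $\bR^m=\Id$. The only point deserving a word of justification is the passage $\bR P_\bD = P_\bD \Rightarrow \bR^k P_\bD = P_\bD$, which follows immediately from \cref{p:200715a} by iteration (or, alternatively, directly from the representation \cref{e:JulianPbD} together with $\bR^m = \bR^0 = \Id$, exactly as in the proof of \cref{p:200715a}).
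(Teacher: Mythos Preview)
Your proposal is correct and follows exactly the paper's own argument: apply $\bR^k P_\bD = P_\bD$ (which the paper also derives from \cref{p:200715a}) termwise inside the sum defining $\bQ$, then use the Gauss summation $\sum_{k=1}^{m-1}k = \tfrac{m(m-1)}{2}$. The only cosmetic difference is that you spell out the inductive justification for $\bR^k P_\bD = P_\bD$, whereas the paper uses it tacitly.
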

\begin{proof}
Using \cref{p:200715a}, we see that 
\begin{align}
\bQ P_\bD
&= \frac{1}{m} \sum_{k=1}^{m-1}k \bR^k P_\bD 
= \frac{1}{m} \sum_{k=1}^{m-1}k P_\bD 
= \frac{1}{m}\frac{(m-1)m}{2} P_\bD 
= \frac{m-1}{2} P_\bD
\end{align}
as claimed.
\end{proof}

\begin{proposition}
\label{p:200715c}
$-\bQ(\Id-\bR) = P_{\bD^\perp}$. 
\end{proposition}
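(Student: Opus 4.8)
The plan is to treat $-\bQ(\Id-\bR)=\bQ\bR-\bQ$ as an ordinary polynomial identity in the single operator $\bR$ and to reduce it, via the wrap-around relation $\bR^m=\bR^0=\Id$ (already exploited in \cref{p:200715a}) together with the formula \cref{e:JulianPbD} for $P_\bD$, to the desired $\Id-P_\bD=P_{\bD^\perp}$. No analytic input is needed; the argument is purely algebraic.

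First I would expand $\bQ\bR=\frac{1}{m}\sum_{k=1}^{m-1}k\bR^{k+1}$ and reindex by $j=k+1$ to obtain $\frac{1}{m}\sum_{j=2}^{m}(j-1)\bR^{j}$. The top term $j=m$ contributes $\frac{m-1}{m}\bR^m=\frac{m-1}{m}\Id$, so that $\bQ\bR=\frac{m-1}{m}\Id+\frac{1}{m}\sum_{k=2}^{m-1}(k-1)\bR^{k}$. Subtracting $\bQ=\frac{1}{m}\sum_{k=1}^{m-1}k\bR^{k}$ then makes the coefficients telescope: for each $k\in\{2,\ldots,m-1\}$ the coefficient of $\bR^{k}$ becomes $\frac{1}{m}\big((k-1)-k\big)=-\frac{1}{m}$, while the $\bR^{1}$ term loses its coefficient entirely. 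Collecting everything yields
\begin{equation*}
\bQ\bR-\bQ=\frac{m-1}{m}\Id-\frac{1}{m}\sum_{k=1}^{m-1}\bR^{k}.
\end{equation*}

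Finally, \cref{e:JulianPbD} gives $\sum_{k=1}^{m-1}\bR^{k}=mP_\bD-\Id$, whence $\bQ\bR-\bQ=\frac{m-1}{m}\Id-\frac{1}{m}(mP_\bD-\Id)=\Id-P_\bD=P_{\bD^\perp}$, as claimed. I expect the only point demanding genuine care to be the bookkeeping in the reindexing step---isolating the $j=m$ term and applying $\bR^m=\Id$---since a sign or off-by-one slip there is the likeliest source of error; everything else is a mechanical telescoping. Should one wish to avoid the index manipulation, an alternative is to verify the identity separately on the complementary subspaces $\bD$ and $\bD^\perp$: both sides annihilate $\bD$ (as $\Id-\bR$ fixes $\bD$ and $P_{\bD^\perp}$ kills it), and on $\bD^\perp$ one checks that $-\bQ$ inverts the injective restriction of $\Id-\bR$. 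However, that decomposition argument is in fact more delicate than the direct calculation, so I would proceed with the polynomial computation above.
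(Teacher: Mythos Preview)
Your proof is correct and follows essentially the same route as the paper: both expand $-\bQ(\Id-\bR)=\bQ\bR-\bQ$ as a polynomial in $\bR$, reindex the $\bQ\bR$ sum, isolate the $\bR^{m}=\Id$ term, telescope the remaining coefficients to $-\tfrac{1}{m}$, and then invoke \cref{e:JulianPbD} to recognize $\Id-P_\bD$. The only cosmetic difference is that the paper clears the denominator by working with $-m\bQ(\Id-\bR)$, whereas you carry the factor $\tfrac{1}{m}$ throughout; your phrase ``the $\bR^{1}$ term loses its coefficient entirely'' is slightly ambiguous, but the displayed formula that follows it is correct.
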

\begin{proof}
Using 
\cref{e:localQ} and \cref{e:JulianPbD}, we obtain 
\begin{subequations}
\begin{align}
-m\bQ(\Id-\bR)
&=
(\bR-\Id)(m\bQ)
= 
(\bR-\Id)\sum_{k=1}^{m-1}k\bR^k\\
&= 
\sum_{k=1}^{m-1}k\bR^{k+1} - \sum_{k=1}^{m-1}k\bR^k
= 
\sum_{k=2}^{m}(k-1)\bR^{k} - \sum_{k=1}^{m-1}k\bR^k\\
&= 
(m-1)\bR^m + \Big( \sum_{k=2}^{m-1}\big((k-1)-k\big)\bR^k\Big) - \bR
\\ 
&= 
(m-1)\Id - \Big( \sum_{k=2}^{m-1}\bR^k\Big) -\bR\\
&= 
m\Id - \sum_{k=0}^{m-1}\bR^k
= m\Id - m P_\bD
= m P_{\bD^\perp},
\end{align}
\end{subequations}
which completes the proof. 
\end{proof}

We are now ready for the main result of this section
which will play a key role in subsequent sections. 

\begin{theorem}
We have 
\begin{equation}
\label{e:200715a}
  \thalb\Id + \bT = \tfrac{m}{2}P_\bD - \bQ
\end{equation}
and 
\begin{equation}
\label{e:200715b}
\big(\thalb\Id + \bT\big)^{-1}
= \Id-\bR+2P_\bD. 
\end{equation}
\end{theorem}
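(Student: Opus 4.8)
The plan is to establish \cref{e:200715a} by a direct comparison of coefficients and then to deduce \cref{e:200715b} by checking that the proposed inverse composes to the identity, routing each product through the three preceding propositions rather than re-expanding everything into powers of $\bR$.

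For \cref{e:200715a}, I would expand the right-hand side using the defining formulas \cref{e:JulianPbD} and \cref{e:localQ}. Since $\tfrac{m}{2}P_\bD = \thalb\sum_{k=0}^{m-1}\bR^k$, splitting off the $k=0$ summand gives $\tfrac{m}{2}P_\bD = \thalb\Id + \thalb\sum_{k=1}^{m-1}\bR^k$. Subtracting $\bQ = \tfrac{1}{m}\sum_{k=1}^{m-1}k\bR^k$ then collects the coefficient of $\bR^k$ (for $1\le k\le m-1$) into $\thalb - \tfrac{k}{m} = \tfrac{m-2k}{2m}$, which by the definition \cref{e:defofT} of $\bT$ is exactly the tail $\bT$. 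Hence $\tfrac{m}{2}P_\bD - \bQ = \thalb\Id + \bT$. This is a one-line match of coefficients and presents no real difficulty.

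For \cref{e:200715b}, I would use \cref{e:200715a} to replace $\thalb\Id + \bT$ by $\tfrac{m}{2}P_\bD - \bQ$ and verify that $\Id - \bR + 2P_\bD$ is its inverse. Writing $\Id - \bR + 2P_\bD = (\Id-\bR) + 2P_\bD$ splits the computation into two pieces. On the $(\Id-\bR)$ piece, \cref{p:200715a} gives $P_\bD(\Id-\bR) = 0$ while \cref{p:200715c} gives $-\bQ(\Id-\bR) = P_{\bD^\perp}$, so $(\tfrac{m}{2}P_\bD - \bQ)(\Id-\bR) = P_{\bD^\perp}$. On the $2P_\bD$ piece, idempotence of $P_\bD$ together with \cref{p:200715b} (which supplies $2\bQ P_\bD = (m-1)P_\bD$) yields $2(\tfrac{m}{2}P_\bD - \bQ)P_\bD = mP_\bD - (m-1)P_\bD = P_\bD$. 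Adding the two pieces gives $P_{\bD^\perp} + P_\bD = \Id$.

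One point worth flagging is that a priori this single computation only exhibits a one-sided inverse. The cleanest way around it is to observe that $P_\bD$ and $\bQ$ are polynomials in $\bR$, so every operator in \cref{e:200715b} is a polynomial in $\bR$; such operators commute, and for commuting operators the one composition being $\Id$ forces the other to be $\Id$ as well. Alternatively one could invoke the strong monotonicity of $\thalb\Id+\bT$ recorded after \cref{e:Id-Rinv}, which already makes the operator invertible, so that any one-sided inverse is automatically the genuine inverse. I would use the commutation remark, since it is self-contained. I do not anticipate a real obstacle here: both identities are essentially bookkeeping, and the only care needed is to feed each product into the correct one of \cref{p:200715a,p:200715b,p:200715c}.
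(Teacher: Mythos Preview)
Your proposal is correct and essentially identical to the paper's proof: both obtain \cref{e:200715a} by matching coefficients of $\bR^k$ and then verify \cref{e:200715b} by computing $(\tfrac{m}{2}P_\bD-\bQ)(\Id-\bR+2P_\bD)$ and routing the cross-terms through \cref{p:200715a,p:200715b,p:200715c} to reach $P_{\bD^\perp}+P_\bD=\Id$. Your explicit remark that commutation of polynomials in $\bR$ upgrades the one-sided inverse to a two-sided one is a point the paper leaves tacit.
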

\begin{proof}
Using 
\cref{e:defofT},
\cref{e:JulianPbD}, and 
\cref{e:localQ},
we have 
\begin{align}
\bT 
&= \frac{1}{2m}\sum_{k=1}^{m-1}(m-2k)\bR^k
= \frac{1}{2}\sum_{k=1}^{m-1}\bR^k 
- \frac{1}{m}\sum_{k=1}^{m-1}k\bR^k
=
\frac{1}{2}\Big(-\Id + m P_\bD\Big) - \bQ
\end{align}
which gives 
\cref{e:200715a}.

Next, using \cref{e:200715a},
\cref{p:200715a}, 
\cref{p:200715b}, 
and \cref{p:200715c}, 
we obtain 
\begin{subequations}
\begin{align}
\big(\thalb\Id + \bT \big)\big(\Id-\bR+2P_\bD)
&= \big(\tfrac{m}{2}P_\bD-\bQ\big)\big(\Id-\bR+2P_\bD)\\
&= \tfrac{m}{2}\big(P_\bD-P_\bD\bR\big) + m P_\bD - \bQ(\Id-\bR) -2\bQ P_\bD
\\ 
&= mP_\bD + P_{\bD^\perp} - (m-1)P_\bD
= P_\bD + P_{\bD^\perp}\\
&= \Id.
\end{align}
\end{subequations}
This verifies \cref{e:200715b} and thus completes the proof.
\end{proof}

\begin{corollary}
We have 
\begin{equation}
\label{e:200705e}
\big(\thalb\Id+T\big)^{-1}|_{\bD^\perp} = (\Id-\bR)|_{\bD^\perp},
\end{equation}
\end{corollary}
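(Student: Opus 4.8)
The plan is to specialize the operator identity for $(\thalb\Id+\bT)^{-1}$ established in the preceding theorem to the subspace $\bD^\perp$, exploiting the single fact that an orthogonal projector annihilates its orthogonal complement.

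First I would recall from \cref{e:200715b} that
\begin{equation}
\big(\thalb\Id+\bT\big)^{-1} = \Id-\bR+2P_\bD
\end{equation}
holds as an identity of operators on all of $\bX$. Next I would observe that, since $P_\bD$ is the orthogonal projector onto $\bD$, we have $P_\bD\bx=\bzero$ for every $\bx\in\bD^\perp$; that is, $P_\bD|_{\bD^\perp}=0$. Restricting the operator identity to $\bD^\perp$ then causes the term $2P_\bD$ to vanish, leaving exactly $(\Id-\bR)|_{\bD^\perp}$, which is the asserted equality.

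The hard part is essentially nonexistent: the corollary is an immediate consequence of the theorem together with the defining property of orthogonal projection onto $\bD$. The only point warranting a moment's thought is that restriction to a subspace distributes over a finite operator sum, so that $(\Id-\bR+2P_\bD)|_{\bD^\perp}=(\Id-\bR)|_{\bD^\perp}+2P_\bD|_{\bD^\perp}$; this is automatic, and the second summand drops out by the observation above.
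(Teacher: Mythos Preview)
Your proof is correct and follows exactly the same approach as the paper: restrict the identity \cref{e:200715b} to $\bD^\perp$ and use that $P_\bD|_{\bD^\perp}=0$.
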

\begin{proof}
From \cref{e:200715b}, we have
$\big(\thalb\Id+T\big)^{-1}|_{\bD^\perp} = 
(\Id-\bR+2P_\bD)|_{\bD^\perp}
= (\Id-\bR)|_{\bD^\perp}$. 
\end{proof}

\section{The proof of the geometry conjecture}

\label{s:main}

Armed with \cref{e:Id-Rinv}, 
write the operator from the dual problem \cref{e:ATdual}
as 
\begin{equation}
\label{e:200804a}
  N_\bC^{-1} + (\Id-\bR)^{-1}
  = N_\bC^{-1} + \thalb\Id+N_{\bD^\perp}+\bT.
\end{equation}
This operator is in general not maximally monotone. 
On the other hand,
$N_\bC^{-1} + N_{\bD^\perp}
= N_{\bC}^{-1} + N_{\bD}^{-1}
= \partial \sigma_\bC + \partial \sigma_\bD
\subseteq \partial \sigma_{\bC+\bD}$, 
where $\sigma_{\mathbf{S}}$ denotes the support function of a subset 
$\mathbf{S}$ of $\bX$. 
Altogether, instead of working with \cref{e:200804a}, which has 
no solution if there are no cycles, 
we propose to work with the 
the \emph{enlarged} dual problem featuring the \emph{maximally 
and strongly monotone} operator 
\begin{equation}
  \thalb\Id + \bT + \partial\sigma_{\bC+\bD}. 
\end{equation}
Using, e.g., \cite[Proposition~22.11(ii)]{BC2017},   
the corresponding inclusion problem 
always has a unique zero, which we denote by 
$\by\in\bX$: 
\begin{empheq}[box=\mybluebox]{equation}
\label{e:200705a}
0 \in \thalb \by + \bT\by + \partial\sigma_{\bC+\bD}(\by).
\end{empheq}
(In fact, $\by$ is the resolvent of the maximally monotone 
operator $2\bT +2\partial\sigma_{\bC+\bD}$, 
evaluated at $0$.) 
Note that, using \cite[Proposition~6.49 and Example~11.2]{BC2017}
$\by \in \dom\partial\sigma_{\bC+\bD} 
\subseteq \dom\sigma_{\bC+\bD}
=\dom(\sigma_{\bC}+\sigma_{\bD})
=\dom\sigma_\bC \cap \dom \sigma_\bD
\subseteq (\rec\bC)^\ominus \cap (\rec\bD)^\ominus$;
thus, 
\begin{equation}
\label{e:yinDperp}
  \by \in (\rec \bC)^\ominus \cap \bD^\perp.
\end{equation}
Now define 
\begin{empheq}[box=\mybluebox]{equation}
\label{e:defofe}
\be := -\thalb \by -\bT\by \in \bD^\perp,
\end{empheq}
where $\be \in \bD^\perp$ 
because 
$\by\in\bD^\perp$ (see \cref{e:yinDperp}) 
and 
$\ran\bT\subseteq \bD^\perp$ (see \cref{e:ranT}).
Note that $-\be = (\thalb\Id+\bT)\by$. 
Hence \cref{e:200705e} yields
\begin{equation}
\label{e:200705f}
\by = (\Id-\bR)(-\be) = \bR\be-\be.
\end{equation}
Note that \cref{e:200705a} is equivalent to 
$\be \in \partial\sigma_{\bC+\bD}(\by) = \partial \iota^*_{\bC+\bD}(\by)$, 
and hence also to  
\begin{equation}
\label{e:200705b}
\by \in N_{\overline{\bC+\bD}}(\be),
\end{equation}
where the superscript ``$\mbox{}^*$'' denotes Fenchel conjugation. 
We pause here to record the following result which 
provides a certificate for $\by$:

\begin{proposition}{\rm \textbf{(a characterization of $\by$)}}
\label{p:verifyby}
The unique solution to \cref{e:200705a} is the unique vector $\by$ satisfying 
the following:
\begin{equation}
\label{e:verifyby}
\by\in\bD^\perp,\;\;
-\thalb\by-\bT\by\in\overline{\bC+\bD},\;\;
\text{and}\;\;
(\forall \bc\in\bC)\;
\scal{\bc}{\by}\leq -\thalb\|\by\|^2.
\end{equation}
\end{proposition}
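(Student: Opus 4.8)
The plan is to unwind the inclusion \cref{e:200705a} into the three geometric conditions of \cref{e:verifyby} by passing through the normal-cone reformulation \cref{e:200705b}. Since \cref{e:200705a} has a unique solution, it suffices to show that a vector $\by$ solves \cref{e:200705a} if and only if it satisfies \cref{e:verifyby}; uniqueness in \cref{e:verifyby} is then inherited. Throughout, set $\be := -\thalb\by-\bT\by$ as in \cref{e:defofe}. The discussion preceding the proposition already records that \cref{e:200705a} is equivalent to $\be\in\partial\sigma_{\bC+\bD}(\by)$, hence to the normal-cone inclusion \cref{e:200705b}, namely $\by\in N_{\overline{\bC+\bD}}(\be)$. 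Because $\overline{\bC+\bD}$ is a nonempty closed convex set, this inclusion is in turn equivalent to the pair of conditions
\begin{equation}
\be\in\overline{\bC+\bD}\quad\text{and}\quad\scal{\by}{\be}=\sigma_{\overline{\bC+\bD}}(\by).
\end{equation}

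Next I would simplify both ingredients under the standing requirement $\by\in\bD^\perp$ (which holds for any solution of \cref{e:200705a} by \cref{e:yinDperp}, and which is imposed as the first condition of \cref{e:verifyby}). For the support function, $\by\in\bD^\perp$ gives $\sigma_\bD(\by)=0$, so that $\sigma_{\overline{\bC+\bD}}(\by)=\sigma_{\bC+\bD}(\by)=\sigma_\bC(\by)+\sigma_\bD(\by)=\sigma_\bC(\by)=\sup_{\bc\in\bC}\scal{\bc}{\by}$. For the inner product, the skewness $\bT^*=-\bT$ from \cref{e:Tskew} yields $\scal{\by}{\bT\by}=0$, whence $\scal{\by}{\be}=\scal{\by}{-\thalb\by-\bT\by}=-\thalb\|\by\|^2$. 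Thus, for $\by\in\bD^\perp$, the normal-cone inclusion becomes
\begin{equation}
\be\in\overline{\bC+\bD}\quad\text{and}\quad\sup_{\bc\in\bC}\scal{\bc}{\by}=-\thalb\|\by\|^2.
\end{equation}

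It then remains to match this with \cref{e:verifyby}. Its first two conditions are literally $\by\in\bD^\perp$ and $\be\in\overline{\bC+\bD}$, so the only point to verify is that the \emph{inequality} $(\forall\bc\in\bC)\ \scal{\bc}{\by}\le-\thalb\|\by\|^2$ is equivalent, given the other two conditions, to the \emph{equality} $\sup_{\bc\in\bC}\scal{\bc}{\by}=-\thalb\|\by\|^2$. The forward implication is immediate. The reverse implication is where the (mild) content lies, and I expect it to be the main obstacle: the inequality only provides $\sigma_\bC(\by)\le-\thalb\|\by\|^2$, and one must produce the matching lower bound. This comes for free from $\be\in\overline{\bC+\bD}$, since then $\scal{\by}{\be}\le\sigma_{\overline{\bC+\bD}}(\by)=\sigma_\bC(\by)$, and the left-hand side equals $-\thalb\|\by\|^2$ by skewness; hence $-\thalb\|\by\|^2\le\sigma_\bC(\by)$, and equality follows. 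This closes the equivalence, and the uniqueness of the solution to \cref{e:200705a} furnishes the uniqueness of the vector satisfying \cref{e:verifyby}.
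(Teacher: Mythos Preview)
Your argument is correct and follows the same route as the paper: both pass through the normal-cone reformulation \cref{e:200705b}, use $\by\in\bD^\perp$ (from \cref{e:yinDperp} in the forward direction, assumed in the backward direction) to eliminate the $\bD$-contribution, and invoke the skewness \cref{e:Tskew} to obtain $\scal{\by}{\be}=-\thalb\|\by\|^2$. The only cosmetic difference is that the paper expands $\by\in N_{\overline{\bC+\bD}}(\be)$ directly as the inequality $(\forall(\bc,\bd)\in\bC\times\bD)\ \scal{\by}{\bc+\bd+\thalb\by+\bT\by}\le 0$ and simplifies, whereas you first convert it to the support-function \emph{equality} $\sigma_\bC(\by)=-\thalb\|\by\|^2$ and then spend a short paragraph showing that, given $\be\in\overline{\bC+\bD}$, the inequality in \cref{e:verifyby} upgrades to this equality; the paper's direct route makes that last step unnecessary.
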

\begin{proof}
As seen, $\by$ solves \cref{e:200705a}
if and only if \cref{e:200705b} holds with $\be$ defined in \cref{e:defofe}. 
The latter condition is equivalent to 
$\be = -\thalb\by-\bT\by\in\overline{\bC+\bD}$ and 
$(\forall (\bc,\bd)\in\bC\times\bD)$
$\tscal{\by}{\bc+\bd+\thalb\by+\bT\by}\leq 0$. 
Because $\by\in\bD^\perp$ (see \cref{e:yinDperp}) 
and $\bT$ is skew (see \cref{e:Tskew}),
the last condition is indeed equivalent to \cref{e:verifyby}.
\end{proof}

Combining \cref{e:defofe} and \cref{e:200705b}, 
we deduce that 
\begin{equation}
\label{e:200723c}
  \be \in \bD^\perp \cap \overline{\bC+\bD}. 
\end{equation}
(This last intersection $\bD^\perp\cap \overline{\bC+\bD}$ 
need not be a singleton as we can 
see by studying the case when $C_1=C_2=\cdots=C_m=X$ and hence $\bC=\bX$, 
in which case the intersection is $\bD^\perp$.)

\begin{theorem}
With $\by$ and $\be$ as defined in 
\cref{e:200705a} and \cref{e:defofe} respectively, 
the set of cycles is given by 
\begin{subequations}
\begin{align}
\bZ 
&= N_\bC^{-1}(\by)\cap(\be +\bD) \label{e:main0}\\
&= \be + \big(\bD \cap (\bC-\be) \big). \label{e:main1}
\end{align}
\end{subequations}
\end{theorem}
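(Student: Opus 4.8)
The plan is to prove the first equality \cref{e:main0}, identifying $\bZ$ with $N_{\bC}^{-1}(\by)\cap(\be+\bD)$, and then to obtain the second equality \cref{e:main1} by elementary set algebra combined with \cref{p:verifyby}. Throughout I would lean on three facts already in hand: that $\by=\bR\be-\be$ (see \cref{e:200705f}); that $\by,\be\in\bD^\perp$; and that $\bR$ fixes $\bD$ pointwise with $\Fix\bR=\bD$, since a vector is left unchanged by the cyclic shift exactly when all its coordinates agree.

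First I would dispatch the easy inclusion $N_{\bC}^{-1}(\by)\cap(\be+\bD)\subseteq\bZ$. Take $\bz$ in the left-hand set, so $\by\in N_{\bC}(\bz)$ and $\bz-\be\in\bD$. Since $\bz-\be\in\bD=\Fix\bR$, we have $\bR(\bz-\be)=\bz-\be$, hence $\bR\bz-\bz=\bR\be-\be=\by\in N_{\bC}(\bz)$; this is $\bR\bz\in(\Id+N_{\bC})(\bz)$, i.e.\ $\bz=P_{\bC}(\bR\bz)$, so $\bz$ is a cycle.

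The reverse inclusion $\bZ\subseteq N_{\bC}^{-1}(\by)\cap(\be+\bD)$ is the heart of the argument, and the main obstacle is to identify $\by$ with the genuine dual solution. Given a cycle $\bz$, set $\by_0:=\bR\bz-\bz$; by \cref{e:cycletodual} this is the unique solution of the dual problem \cref{e:ATdual}. I would then check that $\by_0$ also solves the enlarged problem \cref{e:200705a}: rewriting $(\Id-\bR)^{-1}$ via \cref{e:Id-Rinv} and using $\by_0\in\bD^\perp$ (so that $N_{\bD^\perp}(\by_0)=\bD$), the dual inclusion reads $0\in N_{\bC}^{-1}(\by_0)+\thalb\by_0+\bT\by_0+N_{\bD^\perp}(\by_0)$, and the inclusion $N_{\bC}^{-1}+N_{\bD^\perp}=\partial\sigma_{\bC}+\partial\sigma_{\bD}\subseteq\partial\sigma_{\bC+\bD}$ recorded after \cref{e:200804a} upgrades this to $0\in\thalb\by_0+\bT\by_0+\partial\sigma_{\bC+\bD}(\by_0)$. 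As \cref{e:200705a} has a unique solution $\by$, I conclude $\by_0=\by$. Then $\by=\bR\bz-\bz\in N_{\bC}(\bz)$ gives $\bz\in N_{\bC}^{-1}(\by)$, while $\bR\bz-\bz=\by=\bR\be-\be$ yields $\bR(\bz-\be)=\bz-\be$, so $\bz-\be\in\Fix\bR=\bD$. The delicate point is exactly this bridge: the enlargement $\partial\sigma_{\bC+\bD}$ is strictly larger than $N_{\bC}^{-1}+N_{\bD^\perp}$ in general, so I cannot simply claim that the enlarged solution solves the original dual; instead I run the implication in the safe direction and let strong-monotonicity uniqueness force the coincidence. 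Note that this reasoning also covers the cycle-free case automatically, since then $\bZ=\varnothing$ and the inclusion is vacuous.

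Finally I would settle the second equality through the identity $N_{\bC}^{-1}(\by)\cap(\be+\bD)=\bC\cap(\be+\bD)$. The inclusion ``$\subseteq$'' is immediate as $N_{\bC}^{-1}(\by)\subseteq\bC$. For ``$\supseteq$'', take $\bx\in\bC\cap(\be+\bD)$ and write $\bx=\be+\bd$ with $\bd\in\bD$; since $\by\in\bD^\perp$ this gives $\scal{\bx}{\by}=\scal{\be}{\by}$, and $-\be=(\thalb\Id+\bT)\by$ (see \cref{e:defofe}) together with the skewness $\bT^*=-\bT$ (see \cref{e:Tskew}) gives $\scal{\be}{\by}=-\thalb\|\by\|^2$. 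Combining with $\scal{\bc}{\by}\le-\thalb\|\by\|^2$ for all $\bc\in\bC$ from \cref{p:verifyby}, I obtain $\scal{\by}{\bc-\bx}\le0$ for every $\bc\in\bC$, that is $\by\in N_{\bC}(\bx)$, whence $\bx\in N_{\bC}^{-1}(\by)$. It then remains only to record the routine identity $\be+(\bD\cap(\bC-\be))=\bC\cap(\be+\bD)$, which closes the chain \cref{e:main0}$=$\cref{e:main1}.
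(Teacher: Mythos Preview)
Your argument is correct. It largely parallels the paper's proof but differs in one noteworthy respect. For the inclusion $N_{\bC}^{-1}(\by)\cap(\be+\bD)\subseteq\bZ$, the paper invokes \cref{e:dualtocycles}, which rests on the paramonotonicity machinery of \cite[Theorem~5.3]{BBHM}, whereas you verify directly that any $\bz$ in the left-hand side satisfies $\bR\bz-\bz=\by\in N_{\bC}(\bz)$ and hence $\bz=P_{\bC}(\bR\bz)$; this is more elementary and self-contained. For the reverse inclusion $\bZ\subseteq N_{\bC}^{-1}(\by)\cap(\be+\bD)$, both arguments follow the same line --- a cycle produces a solution of the original dual \cref{e:ATdual}, which then solves the enlarged problem \cref{e:200705a} via the inclusion after \cref{e:200804a}, and strong-monotonicity uniqueness forces it to coincide with $\by$ --- but you spell this bridge out explicitly where the paper's ``Altogether, this yields \cref{e:main0}'' leaves the case analysis implicit. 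For \cref{e:main1}, your route through the intermediate identity $N_{\bC}^{-1}(\by)\cap(\be+\bD)=\bC\cap(\be+\bD)$ and the paper's chain of equivalences use the same ingredients ($\scal{\be}{\by}=-\thalb\|\by\|^2$ from skewness, and $\sigma_{\bC}(\by)\le-\thalb\|\by\|^2$ from \cref{p:verifyby}/\cref{e:200705d}); only the packaging differs.
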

\begin{proof}
First, 
$\bC \subseteq \overline{\bC+\bD}$
because $0\in\bD$. 
Hence 
$(\forall \bc\in\bC)$
$\scal{\by}{\bc-\be}\leq 0$ by \cref{e:200705b}.
It follows that 
\begin{equation}
\label{e:200705d}
\sigma_{\bC}(\by) \leq 
\scal{\by}{\be} = -\thalb\|\by\|^2,
\end{equation}
where the equality follows from \cref{e:Tskew}
and \cref{e:defofe}. 

Next, $\by$ might even solve the original dual \cref{e:ATdual} in which case 
$\bZ$ is given by \cref{e:dualtocycles}. Whether or not this is the case, we 
\emph{always} have, using 
\cref{e:Id-Rinv}, \cref{e:defofe}, and 
\cref{e:yinDperp}, 
\begin{subequations}
\begin{align}
N_\bC^{-1}(\by)\cap -(\Id-\bR)^{-1}(\by)
& = N_\bC^{-1}(\by)\cap \big(-\thalb\by - \bT\by - N_{\bD^\perp}(\by)\big)\\
&= N_{\bC}^{-1}(\by) \cap (\be + \bD). 
\end{align}
\end{subequations}
Altogether, this yields \cref{e:main0}. 

Now let $\bx \in \bX$ and set $\bd := \bx-\be$.
Then, using \cref{e:main0}, \cref{e:yinDperp}, 
and \cref{e:200705d}, 
we have the equivalences 
\begin{subequations}
\begin{align}
\bx \in \bZ
&\Leftrightarrow 
\bx \in N_\bC^{-1}(\by) \cap (\be+\bD)\\
&\Leftrightarrow 
\by \in N_\bC(\bx) \text{~and~} \bx-\be\in\bD\\
&\Leftrightarrow 
\by \in N_\bC(\bd+\be) \text{~and~} \bd\in\bD\\
&\Leftrightarrow 
\bd\in\bD,\, \bd+\be \in\bC,
\text{~and~} (\forall \bc\in\bC)\;\;\scal{\by}{\bc-(\bd+\be)}\leq 0\\
&\Leftrightarrow 
\bd\in\bD,\, \bd+\be \in\bC,
\text{~and~} (\forall \bc\in\bC)\;\;\scal{\by}{\bc-\be}\leq 0\\
&\Leftrightarrow 
\bd \in \bD\cap (\bC-\be)
\text{~and~}
\sigma_\bC(\by) \leq \scal{\by}{\be}\\
&\Leftrightarrow 
\bx-\be \in \bD\cap (\bC-\be),
\end{align}
\end{subequations}
and this gives \cref{e:main1}. 
\end{proof}

\begin{corollary}
\label{c:Shawn}
The following hold:
\begin{enumerate}
\item 
\label{c:Shawn0}
{\rm \textbf{(orthogonal decomposition of $\bZ$)}}
$P_{\bD^\perp}(\bZ) \subseteq \{\be\}$ and 
$P_{\bD}(\bZ) = \bD \cap (\bC-\be)$. 
\item 
\label{c:Shawn1}
$\bZ \neq\varnothing
\Leftrightarrow
\be \in \bC+\bD$.
\item 
\label{c:Shawn2}
If $\be \in \bC+\bD$, say 
$\be = \bc+\bd$, where $\bc\in\bC$ and $\bd\in\bD$,
then $\bc \in\bZ$.
\item 
\label{c:Shawn3}
If $\bz\in\bZ$, then $\be = P_{\bD^\perp}\bz 
\in (\bC+\bD)\cap\bD^\perp$. 
\end{enumerate}
\end{corollary}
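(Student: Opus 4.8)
The plan is to read off all four items directly from the explicit description \cref{e:main1}, namely $\bZ = \be + \big(\bD \cap (\bC-\be)\big)$, using only two structural facts: that $\bD$ is a closed linear subspace of $\bX$ (so that $P_\bD$ and $P_{\bD^\perp}$ are the complementary orthogonal projections and $\bD$ is closed under negation), and that $\be\in\bD^\perp$, as recorded in \cref{e:defofe} and \cref{e:200723c}. Once these are in hand the corollary is essentially bookkeeping; the substantive work was already done in the theorem.

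For \cref{c:Shawn0}, I would note that every $\bx\in\bZ$ has the form $\bx = \be+\bd$ with $\bd\in\bD\cap(\bC-\be)\subseteq\bD$. Since $\be\in\bD^\perp$ and $\bd\in\bD$, the complementary orthogonal projections give $P_{\bD^\perp}\bx = P_{\bD^\perp}\be + P_{\bD^\perp}\bd = \be+\bzero = \be$ and $P_\bD\bx = P_\bD\be + P_\bD\bd = \bzero+\bd = \bd$. As $\bx$ ranges over $\bZ$, the first identity yields $P_{\bD^\perp}(\bZ)\subseteq\{\be\}$, while $\bd$ ranges over all of $\bD\cap(\bC-\be)$, giving $P_\bD(\bZ) = \bD\cap(\bC-\be)$.

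For \cref{c:Shawn1} and \cref{c:Shawn2} I would establish the equivalence $\bD\cap(\bC-\be)\neq\varnothing \Leftrightarrow \be\in\bC+\bD$ together with a concrete witness. If $\be = \bc+\bd$ with $\bc\in\bC$ and $\bd\in\bD$, then $-\bd\in\bD$ and $-\bd+\be = \bc\in\bC$, so $-\bd\in\bD\cap(\bC-\be)$; conversely, any $\bd'\in\bD\cap(\bC-\be)$ gives $\be = (\bd'+\be)-\bd'\in\bC+\bD$. By \cref{e:main1} the left-hand condition is exactly $\bZ\neq\varnothing$, which proves \cref{c:Shawn1}. The same witness settles \cref{c:Shawn2}: feeding $-\bd\in\bD\cap(\bC-\be)$ into \cref{e:main1} gives $\be+(-\bd)=\bc\in\bZ$.

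Finally, \cref{c:Shawn3} is just a combination: for $\bz\in\bZ$, item \cref{c:Shawn0} gives $\be = P_{\bD^\perp}\bz$, and since $\bZ\neq\varnothing$, item \cref{c:Shawn1} gives $\be\in\bC+\bD$; together with $\be\in\bD^\perp$ this is precisely $\be\in(\bC+\bD)\cap\bD^\perp$. I do not anticipate any genuine obstacle here: the only points requiring care are using that $\bD$ is a subspace (so that $-\bd\in\bD$) when producing the witness, and observing that it is exactly the membership $\be\in\bD^\perp$ that turns the splitting in \cref{c:Shawn0} into an orthogonal decomposition rather than a mere translation.
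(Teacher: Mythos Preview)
Your proposal is correct and mirrors the paper's proof essentially line by line: both read all four items directly from \cref{e:main1} together with $\be\in\bD^\perp$ and the fact that $\bD$ is a linear subspace. The only cosmetic difference is in \cref{c:Shawn3}, where the paper argues $\be=\bz-P_\bD\bz\in(\bZ-\bD)\cap\bD^\perp\subseteq(\bC+\bD)\cap\bD^\perp$ directly, while you route through \cref{c:Shawn1}; both are immediate.
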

\begin{proof}
\cref{c:Shawn0}: 
Recall that $\be\in\bD^\perp$ by \cref{e:defofe}. 
Clearly, $\bD\cap (\bC-\be)\subseteq \bD$.
Using 
\cref{e:main1}, we obtain an orthogonal decomposition of $\bZ$,
with $\bD^\perp$ component $P_{\bD^\perp}(\bZ)\subseteq \{\be\}$
and $P_{\bD}(\bZ)=\bD\cap (\bC-\be)$. 
\cref{c:Shawn1}: 
Indeed, using \cref{e:main1}, we have 
$\bZ\neq\varnothing$
$\Leftrightarrow$
$\bD \cap (\bC-\be)\neq\varnothing$
$\Leftrightarrow$
$(\exi \bc\in\bC)$ $\bc-\be\in\bD$ 
$\Leftrightarrow$
$\be \in \bC+\bD$.
\cref{c:Shawn2}: 
Indeed,
$\bc-\be = -\bd \in (\bD\cap(\bC-\be))$ and so 
$\bc\in\be+(\bD\cap(\bC-\be))=\bZ$ using \cref{e:main1}. 
\cref{c:Shawn3}:
Using \cref{c:Shawn0} and \cref{e:defofe},
we obtain 
$\be= P_{\bD^\perp}\bz 
= \bz - P_{\bD}\bz \in (\bZ-\bD)\cap \bD^\perp 
\subseteq (\bC+\bD)\cap\bD^\perp$. 
\end{proof}

At long last, we define 
\begin{empheq}[box=\mybluebox]{equation}
\label{e:defbv}
  \bv := \bR^*\be-\be \in \bD^\perp, 
\end{empheq}
where $\bv \in\bD^\perp$ 
because $\ran(\Id-\bR^*) 
= \ran(\Id-\bR)=\bD^\perp$ by 
\cite[Theorem~2.2(iv)]{Victoria}. 
Also \cref{e:200705f} yields
\begin{equation}
\bv = \bR^*\be-\be = -\bR^*(\bR\be-\be)=-\bR^*\by,
\end{equation}
which in turn gives 
\begin{equation}
\label{e:yfromv}
\by = -\bR\bv.
\end{equation}
Because $\bR^*$ is the circular \emph{left} shift, 
\cref{e:defbv} and \cref{e:yfromv} yield   
\begin{subequations}
\label{e:anotherbv}
\begin{align}
\bv 
&= (e_2-e_1,e_3-e_2,\ldots,e_m-e_{m-1},e_1-e_m), 
\text{where $\be = (e_1,\ldots,e_m)$}\\
&= (-y_2,-y_3,\ldots,-y_{m-1},-y_1), 
\text{where $\by = (y_1,\ldots,y_m)$.}
\end{align}
\end{subequations}

We are now ready for our main result. 

\begin{theorem}{\bf (the geometry conjecture is true)}
\label{t:main}
The vector $\bv$ defined in \cref{e:defbv} (see also \cref{e:anotherbv}) is 
the sought-after difference vector (see \cref{ss:geocon}).
\end{theorem}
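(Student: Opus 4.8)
The plan is to verify directly that the tuple $\bv=(v_1,\ldots,v_m)$ recorded in \cref{e:anotherbv} satisfies both \cref{e:perp} and \cref{e:Fm} (together with the cyclic analogues for $F_{m-1},\ldots,F_1$), exploiting the explicit description of the cycle set $\bZ$ obtained above. I would first dispatch \cref{e:perp}: since $v_i=e_{i+1}-e_i$ for $1\le i\le m-1$ and $v_m=e_1-e_m$, the sum $v_1+\cdots+v_m$ telescopes to $0$; equivalently, $\bv\in\ran(\Id-\bR^*)=\bD^\perp$, so the coordinates of $\bv$ sum to $\bzero$.

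The substantive step is to compute each $F_i$ and match it against the conjectured intersection. Recalling that $F_i=Q_i(\bZ)$ with $Q_i\colon(x_1,\ldots,x_m)\mapsto x_i$, I would substitute the formula $\bZ=\be+(\bD\cap(\bC-\be))$ from \cref{e:main1}. A generic cycle then has the form $\bz=\be+\bd$ with $\bd=(d,\ldots,d)\in\bD$ and $\bd+\be\in\bC$; the latter condition reads $d+e_j\in C_j$, i.e. $d\in C_j-e_j$, for every $j$. Hence the $i$-th coordinate $e_i+d$ ranges exactly over
\begin{equation}
F_i = e_i+\bigcap_{j=1}^m(C_j-e_j),
\end{equation}
and the change of variable $x=e_i+d$ recasts this as
\begin{equation}
F_i = \bigcap_{j=1}^m\big(C_j+(e_i-e_j)\big),
\end{equation}
whose $j=i$ factor is simply $C_i$.

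It then remains to recognize each shift $e_i-e_j$ as the appropriate partial sum of difference vectors. For $i=m$ one has $v_j+v_{j+1}+\cdots+v_{m-1}=(e_{j+1}-e_j)+\cdots+(e_m-e_{m-1})=e_m-e_j$, which is precisely the translation attached to $C_j$ in \cref{e:Fm}; this settles $F_m$. For the remaining $F_i$ the identity persists cyclically, the only delicate point being the wrap-around: when $j>i$ the relevant partial sum runs $v_j+\cdots+v_m+v_1+\cdots+v_{i-1}$, and a short telescoping (using $v_j+\cdots+v_m=e_1-e_j$ together with $v_1+\cdots+v_{i-1}=e_i-e_1$) again collapses to $e_i-e_j$, matching the cyclic analogue of \cref{e:Fm}.

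The genuinely hard work has already been carried out in the characterization of $\bZ$ in \cref{e:main1} and in the component description \cref{e:anotherbv}; consequently the only real obstacle remaining is the index bookkeeping, namely keeping the cyclic partial sums of the $v_j$ aligned with the coordinatewise shifts $e_i-e_j$ and, in particular, handling the wrap-around terms correctly so that every $F_i$ reproduces its conjectured intersection.
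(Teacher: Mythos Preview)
Your proposal is correct and follows essentially the same route as the paper: both arguments hinge on the description $\bZ=\be+(\bD\cap(\bC-\be))$ from \cref{e:main1} and then identify the shifts $e_i-e_j$ with telescoping partial sums of the $v_k$. The only cosmetic difference is that the paper proves the two inclusions for $F_m$ separately (and invokes ``analogously'' for the other $F_i$), whereas you compute $F_i=\bigcap_j(C_j+(e_i-e_j))$ in one shot from the parametrization and treat all $i$ uniformly, including the wrap-around.
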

\begin{proof}
We must verify \cref{e:Fm}. 

First, let $z_m\in F_m$.
Then $z_m$ is the $m$th component of some cycle $\bz$.
Obviously, $\bz \in \bC$.
By \cref{e:main1}, there exists $x\in X$ such that 
\begin{equation}
z_1=e_1+x,z_2=e_2+x,\ldots,z_{m-1}=e_{m-1}+x,z_m = e_m+x.
\end{equation}
Hence 
\begin{subequations}
\begin{align}
z_{m}&\in C_m\\
z_{m}&=e_m+x = (e_{m-1}+x) + (e_m-e_{m-1}) = z_{m-1}+v_{m-1}\in C_{m-1}+v_{m-1}\\
z_{m}&=(e_{m-2}+x)+(e_{m-1}-e_{m-2})+(e_{m}-e_{m-1})\in C_{m-2}+v_{m-2}+v_{m-1}\\
&\;\;\vdots \\
z_{m}&\in C_1 + v_1+v_2+\cdots +v_{m-1}.
\end{align}
\end{subequations}
We deduce that 
\begin{equation}
F_m \subseteq C_m\cap(C_{m-1}+v_{m-1})\cap\cdots \cap (C_1 + v_1 + \cdots +v_{m-1}).
\end{equation}

We now tackle the converse inclusion. 
Let 
\begin{equation}
c_m\in C_m\cap(C_{m-1}+v_{m-1})\cap\cdots \cap (C_1 + v_1 + \cdots +v_{m-1}).
\end{equation}
So there exist $c_1\in C_1,\ldots,c_m\in C_m$ such that 
\begin{subequations}
\begin{align}
c_m 
&= c_{m-1}+v_{m-1}\\
&= c_{m-2}+v_{m-2}+v_{m-1}\\
&\;\; \vdots \\
&= c_2 + v_2 + \cdots + v_{m-1}\\
&= c_1 + v_1 + v_2 + \cdots + v_{m-1}.
\end{align}
\end{subequations}
It follows that 
$c_{m-1}=c_{m-2}+v_{m-2}$,
$\ldots$,
$c_{2}=c_1+v_1$,
and 
$c_1=c_m+v_m$
(because $c_m=c_1+v_1+v_2+\cdots v_{m-1} = c_1-v_m$). 
Setting $\bc := (c_1,\ldots,c_m)$, we rewrite this as 
$\bc = \bR\bc + \bR\bv$. 
Using \cref{e:defbv},
$\bc = \bR\bc +\bR(\bR^*\be-\be)
= \bR\bc + \be-\bR\be$.
Hence 
$(\Id-\bR)(\bc-\be)=0$ 
and thus $\bc-\be \in \ker(\Id-\bR) =\bD$. 
It follows that $\bc-\be \in \bD\cap (\bC-\be)$
and now \cref{e:main1} yields 
\begin{equation}
  \bc = \be + (\bc-\be)\in \be+ \big(\bD\cap(\bC-\be)\big) = \bZ.
\end{equation}
Therefore, 
\begin{equation}
  c_m \in F_m
\end{equation}
which completes the proof of the geometry conjecture!
\end{proof}

\section{The case when $m=2$}
\label{s:m=2}
Throughout this section, we assume that $m=2$.

\subsection{Revisiting known results}

It is instructive to revisit this case even if we know the answer already;
moreover, we will discover a new formula for the difference vector $\bv$.
By \cref{e:defofR} and \cref{e:defofT}, 
$\bR^*=\bR$ and $\bT = 0$.
Hence \cref{e:200705a} turns into 
$0 \in \thalb\by + \partial\sigma_{\bC+\bD}(\by)$
$\Leftrightarrow$
$0\in\thalb\by + N_{\overline{\bC+\bD}}^{-1}(\by)$
$\Leftrightarrow$
$-\thalb\by \in N_{\overline{\bC+\bD}}^{-1}(\by)$
$\Leftrightarrow$
$\by \in N_{\overline{\bC+\bD}}(-\thalb\by)$
$\Leftrightarrow$
$\thalb\by \in (\Id+ N_{\overline{\bC+\bD}})(-\thalb\by)$
$\Leftrightarrow$
$-\thalb\by = P_{\overline{\bC+\bD}}(\thalb\by)$
$\Leftrightarrow$
[$-\thalb\by\in \overline{\bC+\bD}$ and 
$(\forall (\bc,\bd)\in\bC\times\bD)$ 
$\tscal{\bc+\bd-(-\thalb\by)}{\thalb\by-(-\thalb\by)}\leq 0$
]
$\Leftrightarrow$
[$\by\in 2\overline{(\bD-\bC)}$ and 
$(\forall (\bc,\bd)\in\bC\times\bD)$ 
$\tscal{2(\bd-\bc)-\by)}{0-\by}\leq 0$
]
$\Leftrightarrow$
\begin{equation}
\label{e:200805e}
  \by = P_{2\overline{(\bD-\bC)}}(0).
\end{equation}
By \cref{e:defofe},
\begin{equation}
\label{e:200805a}
  \be = -\thalb\by - 0\by = -\thalb P_{2\overline{(\bD-\bC)}}(0)
  = P_{-\overline{(\bD-\bC)}}(0)
  = P_{\overline{\bC+\bD}}(0).
\end{equation}
Finally, by \cref{e:defbv}, 
\begin{equation}
\label{e:200805c}
  \bv = -\bR^*\by = -\bR\by.
\end{equation}
We now express these quantities also in 
the underlying space $X$. 
We claim that 
\begin{equation}
\by \overset{?}{=} \big(P_{\overline{C_2-C_1}}(0),P_{\overline{C_1-C_2}}(0) \big)
{=} \big(P_{\overline{C_2-C_1}}(0),-P_{\overline{C_2-C_1}}(0) \big).
\end{equation}
Set $y := P_{\overline{C_2-C_1}}(0)$.
Then $y\leftarrow c_{2,n}-c_{1,n}$, where 
$(c_{1,n},c_{2,n})_\nnn$ is a sequence in $C_1\times C_2$. 
Now for every $\nnn$, 
$c_{2,n}-c_{1,n} = 2(\thalb(c_{1,n}+c_{2,n})-c_{1,n})$
and $c_{1,n}-c_{2,n} = 2(\thalb(c_{1,n}+c_{2,n})-c_{2,n})$,
so 
\begin{equation}
  \big(c_{2,n}-c_{1,n} ,c_{1,n}-c_{2,n} \big) \in 2(\bD-\bC)
\end{equation}
which implies 
$(y,-y)\in 2\overline{(\bD-\bC)}$. 
Next, let us take 
$(\bc,\bd)\in \bC\times\bD$, say 
$\bc = (c_1,c_2)\in C_1\times C_2$ and 
$\bd = (x,x)$ for some $x\in X$. 
Then 
\begin{subequations}
\begin{align}
\scal{2(\bd-\bc)-(y,-y)}{\bzero-(y,-y)}
&= 
\scal{2(x-c_1,x-c_2)-(y,-y)}{(-y,y)}\\
&= 
\scal{(2x-2c_1-y,2x-2c_2+y)}{(-y,y)}\\
&= 
\scal{2x-2c_1-y}{-y} + \scal{2x-2c_2+y}{y}\\
&= 
\scal{2c_2-2c_1-2y}{-y}\\
&= 
2\scal{(c_2-c_1)-y}{0-y}\\
&\leq 0
\end{align}
\end{subequations}
by definition of $y$. 
We have verified
\begin{equation}
\label{e:200805b}
\by = \big(P_{\overline{C_2-C_1}}(0),P_{\overline{C_1-C_2}}(0) \big). 
\end{equation}
It follows (by \cref{e:200805a} and \cref{e:200805b}) that 
\begin{equation}
\label{e:200805b+}
\be = -\thalb\by = -\thalb \big(P_{\overline{C_2-C_1}}(0),P_{\overline{C_1-C_2}}(0) \big) 
= \thalb \big(P_{\overline{C_1-C_2}}(0),P_{\overline{C_2-C_1}}(0) \big) 
\end{equation}
and (by \cref{e:200805c} and \cref{e:200805b})
\begin{equation}
\label{e:200805f}
\bv = -\bR\by = -\big(P_{\overline{C_1-C_2}}(0),P_{\overline{C_2-C_1}}(0) \big)
= \big(P_{\overline{C_2-C_1}}(0),P_{\overline{C_1-C_2}}(0) \big) = \by. 
\end{equation}
Hence 
\begin{equation}
  v_1 = P_{\overline{C_2-C_1}}(0)
  \;\;\text{and}\;\;
  v_2 = P_{\overline{C_1-C_2}}(0) = -v_1
\end{equation}
and this is completely consistent with the known theory 
exposed in \cref{ss:geocon} (see \cref{e:200805d})! 
Along our journey, we have thus discovered a new identity 
for $\bv$ by combining \cref{e:200805e} with \cref{e:200805f} 
which we record in the following result. 

\begin{proposition} $\bv = \big(P_{\overline{C_2-C_1}}(0),P_{\overline{C_1-C_2}}(0)\big) 
  = P_{2\overline{\bD-\bC}}(0) = 2P_{\overline{\bD-\bC}}(0)$.
\end{proposition}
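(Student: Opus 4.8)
The plan is to assemble identities already established in this subsection; almost nothing new is required. The first equality, $\bv = \big(P_{\overline{C_2-C_1}}(0),P_{\overline{C_1-C_2}}(0)\big)$, is exactly the content of \cref{e:200805f}, so there is nothing to prove there. For the second equality I would chain two earlier facts: \cref{e:200805f} gives $\bv=\by$, while \cref{e:200805e} gives $\by = P_{2\overline{(\bD-\bC)}}(0)$; since the set $2\overline{(\bD-\bC)}$ is literally the set written $2\overline{\bD-\bC}$ in the statement, combining the two yields $\bv = P_{2\overline{\bD-\bC}}(0)$ at once.

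The only genuinely new ingredient is the third equality, $P_{2\overline{\bD-\bC}}(0) = 2P_{\overline{\bD-\bC}}(0)$, which is a special case of the positive homogeneity of the nearest-point map at the origin: for every nonempty closed convex set $S\subseteq\bX$ and every $\lambda>0$ one has $P_{\lambda S}(0)=\lambda P_S(0)$. To verify this I would let $p:=P_S(0)$ be the minimal-norm element of $S$; then $\lambda p\in\lambda S$, and for any $\lambda s\in\lambda S$ we have $\|\lambda s\|=\lambda\|s\|\geq\lambda\|p\|=\|\lambda p\|$, so $\lambda p$ is the minimal-norm element of $\lambda S$, i.e.\ $P_{\lambda S}(0)=\lambda p=\lambda P_S(0)$. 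Applying this with $S=\overline{\bD-\bC}$ (which is nonempty, closed, and convex, being the closure of the difference of two nonempty convex sets) and $\lambda=2$ completes the chain.

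I do not expect any real obstacle: each equality is either a restatement of an already-proved identity or the elementary scaling property above. The one bookkeeping point to keep in mind is that closure and positive scaling commute, so that $2\overline{\bD-\bC}=\overline{2(\bD-\bC)}$ and the set appearing in \cref{e:200805e} is precisely the one being rescaled in the homogeneity argument; once this is noted the proof is immediate.
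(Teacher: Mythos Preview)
Your proposal is correct and matches the paper's approach: the proposition is stated there as a direct consequence of combining \cref{e:200805e} and \cref{e:200805f}, with no separate proof given. Your explicit verification of the homogeneity identity $P_{\lambda S}(0)=\lambda P_S(0)$ for the third equality is a welcome addition, since the paper leaves that step implicit.
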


\subsection{Two lines}

\label{s:2lines}
It is instructive to 
consider two general lines in $X$, given by 
\begin{equation}
  C_1 = c_1+\RR u_1,
  \; C_2 = c_2 + \RR u_2,
  \quad
  \text{where}
  \;\;
  c_1\perp u_1,
  \;
  c_2\perp u_2,
  \;
  \text{and}
  \; 
  \|u_1\|=\|u_2\|=1
\end{equation}
because we will obtain descriptions of $\bZ$, $\bv$, $\by$, and $\be$.
We start by noting that for every $i\in\{1,2\}$,
\begin{equation}
(\forall x\in X)\;\;  P_i(x) = c_i + \scal{u_i}{x}u_i.
\end{equation}
Let $\bz = (z_1,z_2) \in C_1 \times C_2$.
Then $z_1 = c_1+\rho_1u_1$ and $z_2 = c_2+\rho_2u_2$
for some $\rho_1,\rho_2$ in $\RR$.
Now assume that $\bz$ is actually a cycle. 
Then $z_2 = P_2P_1z_2$, i.e., 
\begin{subequations}
\begin{align}
c_2+\rho_2u_2
&= z_2 \\
&= P_2P_1z_2 \\
&= c_2+\scal{u_2}{P_1z_2}u_2\\
&= c_2+\scal{u_2}{c_1+\scal{u_1}{z_2}u_1}u_2\\
&= c_2+\big(\scal{u_2}{c_1}+\scal{u_1}{z_2}\scal{u_2}{u_1}\big)u_2\\
&= c_2+\big(\scal{u_2}{c_1}+\scal{u_1}{c_2+\rho_2u_2}\scal{u_2}{u_1}\big)u_2\\
&= c_2+\big(\scal{u_2}{c_1}+ \scal{u_1}{c_2}\scal{u_2}{u_1} + 
\rho_2 \scal{u_1}{u_2}\scal{u_2}{u_1}\big)u_2;
\end{align}
\end{subequations}
equivalently, 
\begin{equation}
\label{e:200808a}
\rho_2\big(1-\scal{u_1}{u_2}^2\big)
= \scal{c_1}{u_2} + \scal{u_1}{u_2}\scal{u_1}{c_2}.
\end{equation}

The theory bifurcates from here as we will see in the following subsections.

\subsubsection{The lines are parallel}

Let's first assume that 
the two lines $C_1,C_2$ are parallel; equivalently, 
$\scal{u_1}{u_2}^2 = 1$. 
Without loss of generality, $u_2 = u_1 =: u$. 
Then \emph{every} $\rho_2$ in $\RR$ solves \cref{e:200808a}. 
It then follows that 
the set of cycles is
\begin{equation}
\bZ = (c_1,c_2)+\RR(u,u). 
\end{equation}
Moreover, using \cref{e:cheatcycle}, \cref{e:200805f}, and 
\cref{e:200805b+}, we obtain 
\begin{equation}
\bv = (c_2-c_1,c_1-c_2) = \by
\;\;\text{and}\;\;
\be = \thalb(c_1-c_2,c_2-c_1).
\end{equation}

\subsubsection{The lines are not parallel}

Now we assume that $C_1,C_2$ are not parallel.
Then $\scal{u_1}{u_2}^2 < 1$ and solving 
\cref{e:200808a} for $\rho_2$ yields
\begin{equation}
\rho_2 := \frac{\scal{u_2}{c_1} + \scal{u_1}{u_2}\scal{u_1}{c_2}}{1-\scal{u_1}{u_2}^2}
\end{equation}
and analogously 
\begin{equation}
\rho_1 := \frac{\scal{u_1}{c_2} + \scal{u_1}{u_2}\scal{u_2}{c_1}}{1-\scal{u_1}{u_2}^2}. 
\end{equation}
Hence the set of cycles $\bZ$ has only one element, namely 
\begin{equation}
  \bz = (z_1,z_2) = (c_1+\rho_1u_1,c_2+\rho_2u_2);
\end{equation}
and
$\bv = (z_2-z_1,z_1-z_2) = \by$ and $\be = -\thalb \bv$
which we don't expand as the expressions don't simplify.

\section{The case when $m=3$}

\label{s:m=3}

Throughout this section, we assume that $m=3$.
Then the matrix representations for $\bT$ (see \cref{e:defofT}) is 
\begin{equation}
\label{e:coldday0}
\bT = \frac{1}{6}\begin{pmatrix}
0 & 0 & 1 \\
1 & 0 & 0 \\
0 & 1 & 0
\end{pmatrix}
-\frac{1}{6} \begin{pmatrix}
0 & 1 & 0 \\
0 & 0 & 1 \\
1 & 0 & 0 
\end{pmatrix}
= \frac{1}{6} \begin{pmatrix*}[r]
0 & -1 & 1 \\
1 & 0 & -1 \\
-1 & 1 & 0
\end{pmatrix*} 
\end{equation}
and thus 
\begin{equation}
\label{e:coldday1}
-\thalb\Id-\bT 
=  \frac{1}{6} \begin{pmatrix*}[r]
  -3 & 1 & -1 \\
  -1 & -3 & 1 \\
  1 & -1 & -3 
  \end{pmatrix*}. 
\end{equation}

Thanks to \cref{e:coldday1}, \cref{p:verifyby}, \cref{e:defofe}, 
and \cref{e:anotherbv},
we obtain the following result:

\begin{theorem}
\label{t:verifyby3}
Let $\by = (y_1,y_2,y_3)\in \bX=X^3$.
Then $\by$ is the unique solution of \cref{e:200705a} if 
and only if all of the following hold: 
\begin{equation}
\label{e:verifyby3a}
y_1+y_2+y_3=0,
\end{equation}
there exist 
sequences $(c_{1,n})_\nnn$ in $C_1$, 
$(c_{2,n})_\nnn$ in $C_2$, 
$(c_{3,n})_\nnn$ in $C_3$,
and $(x_n)_\nnn$ in $X$ such that 
\begin{subequations}
\label{e:verifyby3b}
\begin{align}
 c_{1,n}+x_n &\to \tfrac{1}{6}\big(-3y_1+y_2 - y_3\big),  \\
 c_{2,n}+x_n &\to \tfrac{1}{6}\big(-y_1-3y_2+y_3\big), \\
 c_{3,n}+x_n &\to \tfrac{1}{6}\big(y_1-y_2 - 3y_3\big), 
\end{align}
\end{subequations}
and 
$(\forall (c_1,c_2,c_3)\in C_1\times C_2\times C_3)$
\begin{equation}
\label{e:verifyby3c}
\scal{y_1}{c_1}+\scal{y_2}{c_2}+\scal{y_3}{c_3} 
\leq -\thalb\big(\|y_1\|^2 + \|y_2\|^2 + \|y_3\|^2 \big).
\end{equation}
If $\by=(y_1,y_2,y_3)$ satisfies all these conditions, then 
\begin{equation}
\label{e:verifyby3e}
\be = (e_1,e_2,e_3) = \tfrac{1}{6}\big(-3y_1+y_2-y_3,-y_1-3y_2+y_3,y_1-y_3-3y_3 \big)
\end{equation}
and 
\begin{equation}
\label{e:verifyby3v}
\bv = -(y_2,y_3,y_1)
\end{equation}
are the vectors from \cref{e:defofe} and \cref{e:defbv}, respectively. 
\end{theorem}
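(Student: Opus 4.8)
The plan is to specialize the general characterization of $\by$ provided by \cref{p:verifyby} to the case $m=3$, reading off the three scalar-valued conditions from the three vector-valued ones. The proof is essentially a matter of computing the relevant matrix product and unwinding the definitions; I do not expect any genuine obstacle, only bookkeeping. First I would invoke \cref{p:verifyby}, which says that $\by$ is the unique solution of \cref{e:200705a} precisely when $\by\in\bD^\perp$, when $-\thalb\by-\bT\by\in\overline{\bC+\bD}$, and when $\scal{\bc}{\by}\leq-\thalb\|\by\|^2$ for all $\bc\in\bC$. The task is then to translate each of these three clauses into the coordinate form displayed in \cref{e:verifyby3a}, \cref{e:verifyby3b}, and \cref{e:verifyby3c}.

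For the first clause, $\by=(y_1,y_2,y_3)\in\bD^\perp$ means exactly that $\by$ is orthogonal to every diagonal vector $(x,x,x)$, i.e.\ $\scal{y_1+y_2+y_3}{x}=0$ for all $x\in X$, which is equivalent to $y_1+y_2+y_3=0$; this yields \cref{e:verifyby3a}. For the second clause I would use \cref{e:coldday1}, which gives the matrix representation of $-\thalb\Id-\bT$, so that $-\thalb\by-\bT\by$ is the triple $\tfrac16(-3y_1+y_2-y_3,\,-y_1-3y_2+y_3,\,y_1-y_2-3y_3)$. Membership of this triple in $\overline{\bC+\bD}=\overline{(C_1\times C_2\times C_3)+\bD}$ is, by definition of closure and of the product and diagonal sets, exactly the assertion that there exist sequences $(c_{i,n})$ in $C_i$ and $(x_n)$ in $X$ with $c_{i,n}+x_n$ converging to the $i$th component; this is \cref{e:verifyby3b}. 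For the third clause, writing $\bc=(c_1,c_2,c_3)$ and expanding both the inner product $\scal{\bc}{\by}$ and the norm $\|\by\|^2$ componentwise turns $\scal{\bc}{\by}\leq-\thalb\|\by\|^2$ into \cref{e:verifyby3c} directly.

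It remains to justify the two formulas \cref{e:verifyby3e} and \cref{e:verifyby3v}. The first is immediate: by \cref{e:defofe} we have $\be=-\thalb\by-\bT\by$, and this is precisely the triple already computed from \cref{e:coldday1}, giving \cref{e:verifyby3e}. For the second, \cref{e:anotherbv} records that $\bv=(-y_2,-y_3,-y_1)$, which is exactly $-(y_2,y_3,y_1)$ and hence \cref{e:verifyby3v}. The only points requiring mild care are the harmless typographical collapses in \cref{e:verifyby3e} (the third component is $\tfrac16(y_1-y_2-3y_3)$, matching the matrix), and confirming that the inequality in \cref{p:verifyby} needs only be checked over $\bc\in\bC$ rather than over $\overline{\bC+\bD}$, which is already built into the statement of \cref{p:verifyby}. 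No step is conceptually hard; the work is entirely the transcription of the product-space conditions into coordinates via the explicit matrix \cref{e:coldday1}.
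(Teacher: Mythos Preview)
Your proposal is correct and follows exactly the paper's own approach: the paper simply states that the theorem follows from \cref{e:coldday1}, \cref{p:verifyby}, \cref{e:defofe}, and \cref{e:anotherbv}, which is precisely the transcription you outline. Your observation about the typographical slip in the third component of \cref{e:verifyby3e} is also accurate.
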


Note that 
if $\bv=(v_1,v_2,v_3)$,
then we can obtain $\by$ through \cref{e:yfromv}:
\begin{equation}
\label{e:3yfromv}
\by = -\bR\bv = -(v_3,v_1,v_2).
\end{equation}
Moreover, if desired, we can find $\be$
by combining \cref{e:defofe} and \cref{e:coldday0}.

\subsection{Three lines}

Let us consider three lines, which can be treated 
similar to two lines (see \cref{s:2lines}).
(For brevity, we will omit full details on 
the somewhat tedious algebraic manipulations.)
We assume that 
\begin{equation}
  C_1 = c_1+\RR u_1,
  \; C_2 = c_2 + \RR u_2,
  \; C_3 = c_3 + \RR u_3,
\end{equation}
where
\begin{equation}
  c_1\perp u_1,
  \;
  c_2\perp u_2,
  \;
  c_3\perp u_3\;\;
  \text{and}
  \;\;
  \|u_1\|=\|u_2\|=\|u_3\|=1. 
\end{equation}

\subsubsection{All three lines are parallel}

\label{sss:para}
Let's first assume that all lines are parallel;
equivalently, 
$\scal{u_3}{u_2}\scal{u_2}{u_1}\scal{u_1}{u_3} = 1$.
Without loss of generality, $u := u_1=u_2=u_3$. 
Then the set of cycles is 
\begin{equation}
  \bZ = (c_1,c_2,c_3) + \RR(u,u,u)
\end{equation}
and thus the difference vector is 
\begin{equation}
\bv = (c_2-c_1,c_3-c_2,c_1-c_3).
\end{equation}
In \cref{fig:2}, we visualize this case for three lines in $\RR^3$.

\begin{figure}
\centering
\includegraphics[width=0.95\linewidth]{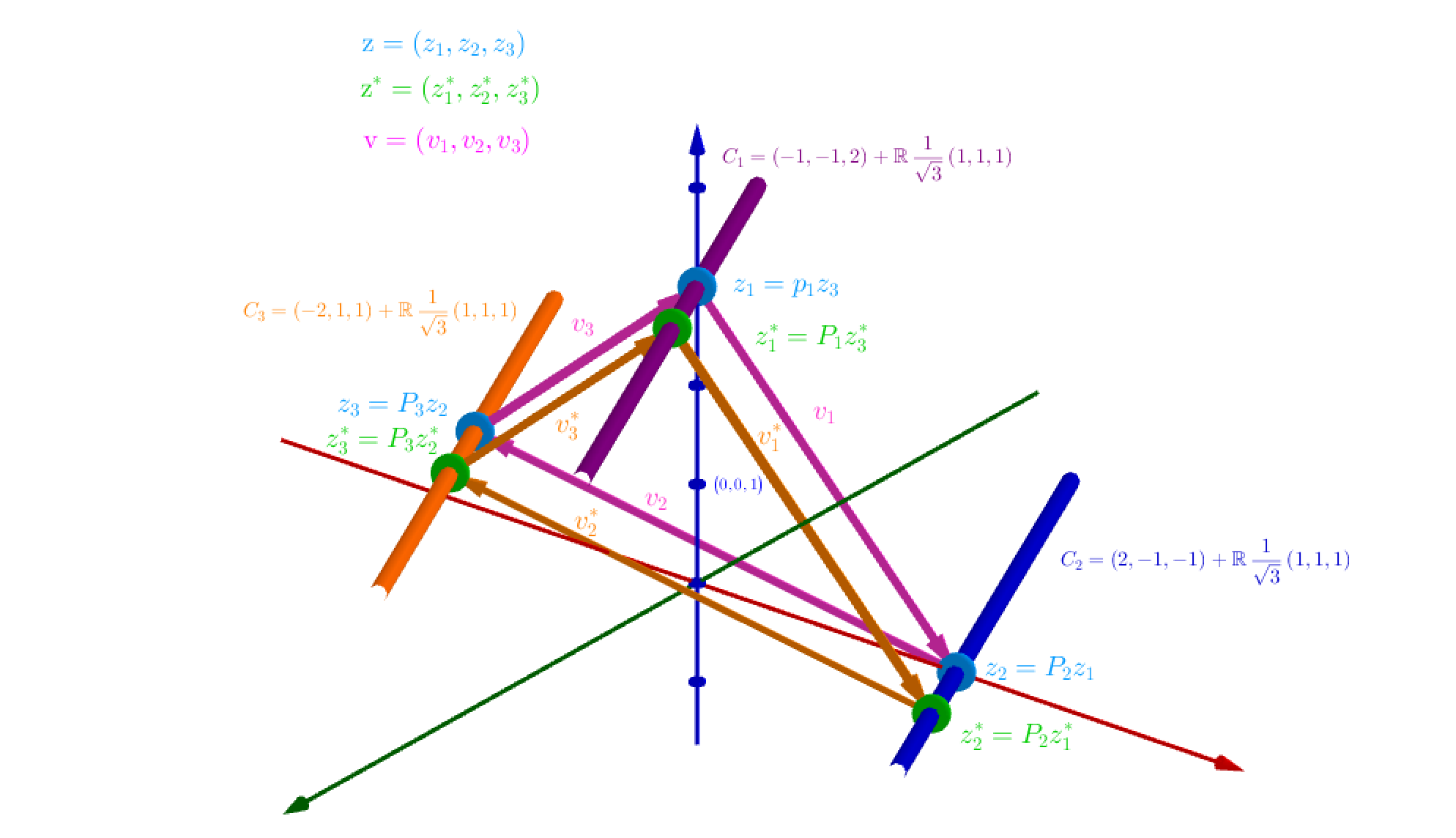}
\caption{Visualization of the cycles and 
the difference vectors for three parallel lines in $\RR^3$. 
See \cref{sss:para} for details.}
\label{fig:2}
\end{figure}

\subsubsection{Not all three lines are parallel}

\label{sss:notpara}

The case when not all lines are parallel corresponds to 
$\scal{u_3}{u_2}\scal{u_2}{u_1}\scal{u_1}{u_3} < 1$.
Then the set of cycles $\bZ$ consists is a singleton containing 
\begin{equation}
\bz = (z_1,z_2,z_3) = (c_1+\rho_1u_1,c_2+\rho_2u_2,c_3+\rho_3u_3), 
\end{equation}
where 
\begin{subequations}
\begin{align}
  \rho_1 &:=
  \frac{\scal{u_1}{c_3}+\scal{u_1}{u_3}\scal{u_3}{c_2}%
  +\scal{u_1}{u_3}\scal{u_3}{u_2}\scal{u_2}{c_1}}%
  {1-\scal{u_3}{u_2}\scal{u_2}{u_1}\scal{u_1}{u_3}},\\[+2mm]
  \rho_2 &:=
  \frac{\scal{u_2}{c_1}+\scal{u_2}{u_1}\scal{u_1}{c_3}%
  +\scal{u_2}{u_1}\scal{u_1}{u_3}\scal{u_3}{c_2}}%
  {1-\scal{u_3}{u_2}\scal{u_2}{u_1}\scal{u_1}{u_3}},\\[+2mm]
  \rho_3 &:=
  \frac{\scal{u_3}{c_2}+\scal{u_3}{u_2}\scal{u_2}{c_1}%
  +\scal{u_3}{u_2}\scal{u_2}{u_1}\scal{u_1}{c_3}}%
  {1-\scal{u_3}{u_2}\scal{u_2}{u_1}\scal{u_1}{u_3}},
\end{align}
\end{subequations}
and 
\begin{equation}
  \bv = (z_2-z_1,z_3-z_2,z_1-z_3).
\end{equation}
In \cref{fig:3}, we visualize this case for three lines in $\RR^3$. 
\begin{figure}
\centering
\includegraphics[width=0.95\linewidth]{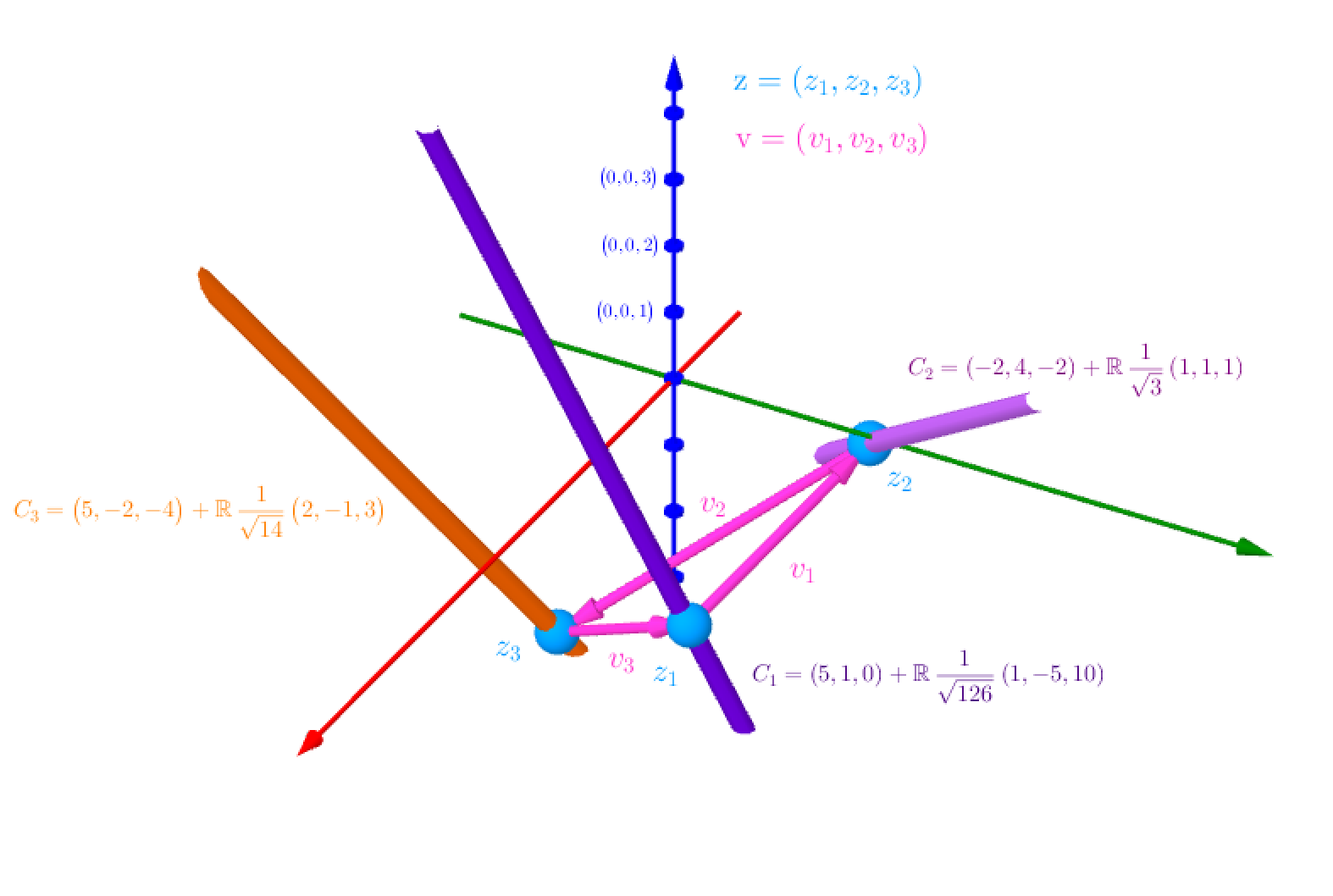}
\caption{Visualization of the cycle and 
the difference vectors for three lines in $\RR^3$ that are not parallel. 
See \cref{sss:notpara} for details.}
\label{fig:3}
\end{figure}

\subsection{An example featuring the epigraph of the exponential function}

\label{ss:expo}
In this section, we specialize further to 
\begin{equation}
X=\RR^2. 
\end{equation}
Inspired by \cite[Section~3]{DeP}, we will present three sets 
in the Euclidean plane and consider two different orderings.
The sets are the epigraph of the exponential function,
\begin{equation}
  \epi(\exp) = \menge{(\xi,\eta)\in\RR^2}{\exp(\xi)\leq\eta} = \gra(\exp)+(\{0\}\times\RR),
\end{equation}
along with the two horizontal lines 
\begin{equation}
  \RR\times\{0\} 
  \;\;\text{and}\;\;
  \RR\times\{1\}. 
\end{equation}

In the following we describe two orderings, one leading to the presence 
of cycles, the other to their absence. 
The case when there are cycles is depicted in \cref{fig:1}. 

\begin{figure}
	\centering
	\includegraphics[width=0.95\linewidth]{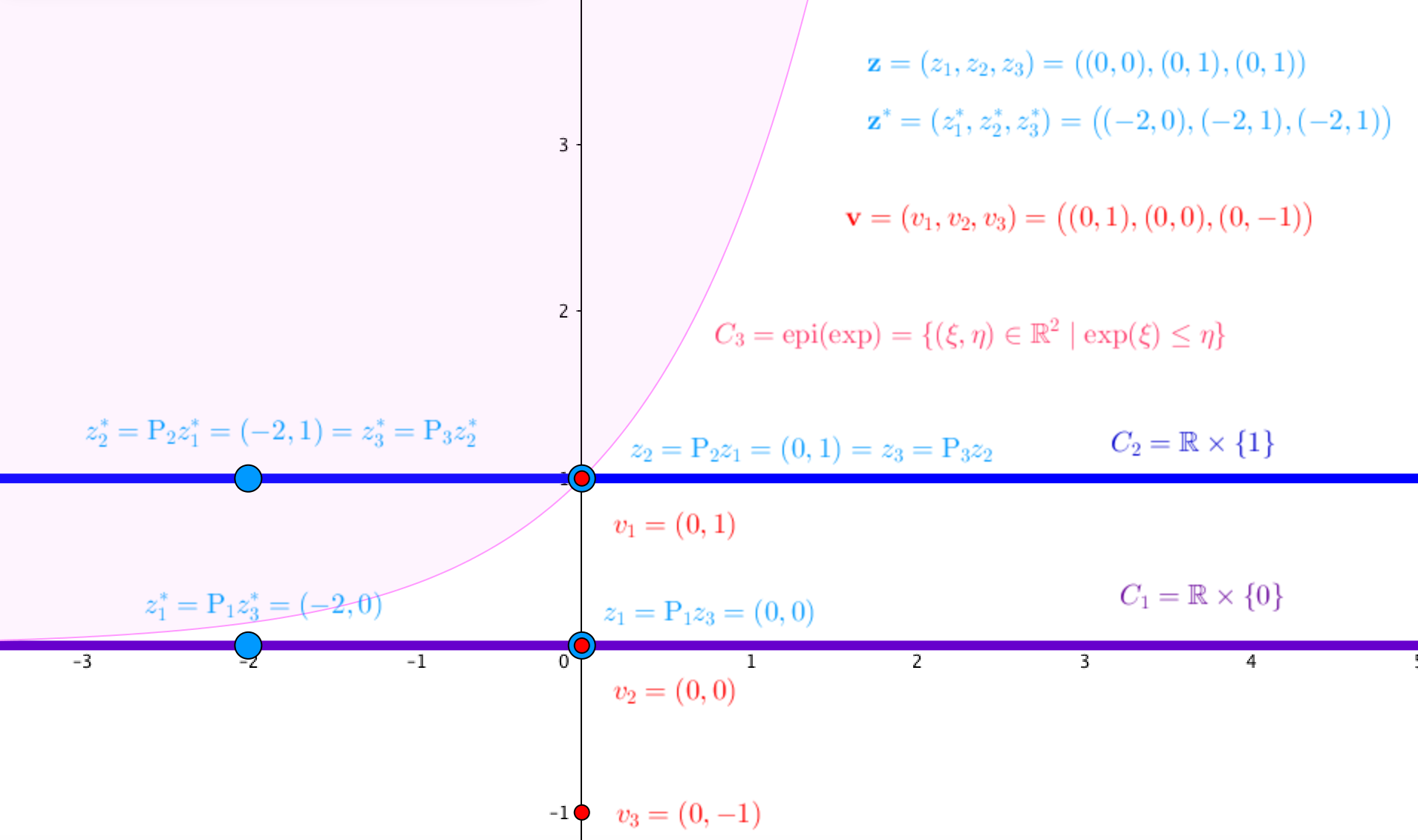}
  \caption{Visualization of the case of two lines and the 
  epigraph of the exponential function where there are cycles. 
  See \cref{ss:expo} for details.}
	\label{fig:1}
\end{figure}

\subsubsection{An ordering with cycles}

In this section, we assume that 
\begin{equation}
C_1 = \RR\times\{0\},\;\;
C_2 = \RR\times\{1\},\;\;
C_3 = \epi(\exp).
\end{equation}
Now set 
\begin{equation}
\label{e:magicy1}
  \by = (y_1,y_2,y_3) = \big((0,1),(0,-1),(0,0)\big). 
\end{equation}
We claim that \cref{e:magicy1} satisfies the characterization 
provided by \cref{t:verifyby3}.

Clearly, $y_1+y_2+y_3=(0+0+0,1-1+0) = (0,0)$ and so \cref{e:verifyby3a} holds. 

Next, set 
$c_{1,n}\equiv (0,0)\in C_1$, 
$c_{2,n}\equiv (0,1)\in C_2$,
$c_{3,n}\equiv (0,1) = (0,\exp(0))\in C_3$, and 
$x_n\equiv (0,-\tfrac{2}{3})\in X$. 
Then
\begin{subequations}
\begin{align}
  c_{1,n} + x_n &\equiv (0,-\tfrac{2}{3}) = \tfrac{1}{6}\big(-3(0,1)+(0,-1)-(0,0)\big)
  = \tfrac{1}{6}\big(-3y_1+y_2-y_3\big),\\
  c_{2,n} + x_n &\equiv (0,\tfrac{1}{3}) = \tfrac{1}{6}\big(-(0,1)-3(0,-1)+(0,0)\big)
  = \tfrac{1}{6}\big(-y_1-3y_2+y_3\big),\\
  c_{3,n} + x_n &\equiv (0,\tfrac{1}{3}) = \tfrac{1}{6}\big((0,1)-(0,-1)-3(0,0)\big)
  = \tfrac{1}{6}\big(y_1-y_2-3y_3\big),
\end{align}
\end{subequations}
and thus \cref{e:verifyby3b} holds. 

Now let $c_1=(\gamma_1,0)\in C_1$,
$c_2=(\gamma_2,1)\in C_2$, and 
$c_3 = (\gamma_3,\exp(\gamma_3)+\delta_3)\in C_3$, 
where $\{\gamma_1,\gamma_2,\gamma_3\}\subseteq \RR$, and $\delta_3\in\RP$. 
Then
\begin{subequations}
\begin{align}
  &\hspace{-1cm}\scal{y_1}{c_1}+\scal{y_2}{c_2}+\scal{y_3}{c_3} \\
  &= \scal{(0,1)}{(\gamma_1,0)}+\scal{(0,-1)}{(\gamma_2,1)}
  +\scal{(0,0)}{(\gamma_3,\exp(\gamma_3)+\delta_3)}\\
  &= -1\\
  &= -\thalb\big(1+1 +0\big)\\
  &= -\thalb\big(\|(0,1)\|^2 + \|(0,-1)\|^2 + \|(0,0)\|^2 \big)\\
  &= -\thalb\big(\|y_1\|^2 + \|y_2\|^2 + \|y_3\|^2 \big).
\end{align}
\end{subequations}
and thus \cref{e:verifyby3c} holds (even with equality). 

Next, using \cref{e:verifyby3e} and \cref{e:verifyby3v}, 
we obtain 
\begin{subequations}
\begin{align}
  \be &= (e_1,e_2,e_3) = \big((0,-\tfrac{2}{3}),(0,\tfrac{1}{3}),(0,\tfrac{1}{3}) \big)\\
  \bv &= (v_1,v_2,v_3) = \big((0,1),(0,0),(0,-1)\big). 
\end{align}
\end{subequations}
The vector $\bv$ allows us to find the fixed point sets $F_1,F_2,F_3$ 
(see \cref{e:defF_i}) via \cref{t:main}. 
For instance,
\begin{align}
  F_3 &= C_3 \cap (C_2+v_2) \cap (C_1+v_1+v_2)\\
  &=\epi(\exp)\cap(\RR\times\{1\}+(0,0))\cap (\RR\times\{0\}+(0,1))\\
  &=\epi(\exp)\cap(\RR\times\{1\})\cap(\RR\times\{1\})\\
  &= \RM\times\{1\},
\end{align}
which can also be seen geometrically.

\subsubsection{An ordering without cycles}

\label{sss:nocycles}

In this section, we assume that 
\begin{equation}
C_1 = \RR\times\{1\},\;\;
C_2 = \RR\times\{0\},\;\;
C_3 = \epi(\exp),
\end{equation}
which is nearly the same set up as in the last --- the crucial difference
is that $C_1$ and $C_2$ were interchanged!
Now set 
\begin{equation}
\label{e:magicy2}
  \by = (y_1,y_2,y_3) = \big((0,-1),(0,1),(0,0)\big). 
\end{equation}
We claim that \cref{e:magicy2} satisfies the characterization 
provided by \cref{t:verifyby3}.

Clearly, $y_1+y_2+y_3=(0+0+0,-1+1+0) = (0,0)$ and so \cref{e:verifyby3a} holds.

Next, set $(\forall\nnn)$
$c_{1,n}= (-n,1)\in C_1$, 
$c_{2,n}= (-n,0)\in C_2$,
$c_{3,n}= (-n,\exp(-n)\in C_3$, and 
$x_n= (n,-\tfrac{1}{3})\in X$. 
Then
\begin{subequations}
\begin{align}
  c_{1,n} + x_n &\equiv (0,\tfrac{2}{3}) = \tfrac{1}{6}\big(-3(0,-1)+(0,1)-(0,0)\big)
  = \tfrac{1}{6}\big(-3y_1+y_2-y_3\big),\\
  c_{2,n} + x_n &\equiv (0,-\tfrac{1}{3}) = \tfrac{1}{6}\big(-(0,-1)-3(0,1)+(0,0)\big)
  = \tfrac{1}{6}\big(-y_1-3y_2+y_3\big),\\
  c_{3,n} + x_n &= (0,\exp(-n)-\tfrac{1}{3})\notag\\
  &\to (0,-\tfrac{1}{3}) = \tfrac{1}{6}\big((0,-1)-(0,1)-3(0,0)\big)
  = \tfrac{1}{6}\big(y_1-y_2-3y_3\big),
\end{align}
\end{subequations}
and thus \cref{e:verifyby3b} holds. 

Now let $c_1=(\gamma_1,1)\in C_1$,
$c_2=(\gamma_2,0)\in C_2$, and 
$c_3 = (\gamma_3,\exp(\gamma_3)+\delta_3)\in C_3$, 
where $\{\gamma_1,\gamma_2,\gamma_3\}\subseteq \RR$, and $\delta_3\in\RP$. 
Then
\begin{subequations}
\begin{align}
  &\hspace{-1cm}\scal{y_1}{c_1}+\scal{y_2}{c_2}+\scal{y_3}{c_3} \\
  &= \scal{(0,-1)}{(\gamma_1,1)}+\scal{(0,1)}{(\gamma_2,0)}
  +\scal{(0,0)}{(\gamma_3,\exp(\gamma_3)+\delta_3)}\\
  &= -1\\
  &= -\thalb\big(1+1 +0\big)\\
  &= -\thalb\big(\|(0,-1)\|^2 + \|(0,1)\|^2 + \|(0,0)\|^2 \big)\\
  &= -\thalb\big(\|y_1\|^2 + \|y_2\|^2 + \|y_3\|^2 \big).
\end{align}
\end{subequations}
and thus \cref{e:verifyby3c} holds (again with equality). 

Next, using \cref{e:verifyby3e} and \cref{e:verifyby3v}, 
we obtain 
\begin{subequations}
\begin{align}
  \be &= (e_1,e_2,e_3) = \big((0,\tfrac{2}{3}),(0,-\tfrac{1}{3}),(0,-\tfrac{1}{3}) \big)\\
  \bv &= (v_1,v_2,v_3) = \big((0,-1),(0,0),(0,1)\big). 
\end{align}
\end{subequations}
The vector $\bv$ allows us to find the fixed point sets $F_1,F_2,F_3$ 
(see \cref{e:defF_i}) via \cref{t:main}. 
For instance,
\begin{align}
  F_3 &= C_3 \cap (C_2+v_2) \cap (C_1+v_1+v_2)\\
  &=\epi(\exp)\cap(\RR\times\{0\}+(0,0))\cap (\RR\times\{1\}+(0,-1))\\
  &=\epi(\exp)\cap(\RR\times\{0\})\cap(\RR\times\{0\})\\
  &= \varnothing, 
\end{align}
which again can also be seen geometrically. 

\section{Finding the difference vectors for $m\leq 5$ by Banach}

\label{s:Banach}

In this section, we discuss an iterative technique 
to compute $\by$ (given by \cref{e:200705a}) which can be used 
to obtain the difference vector $\bv$ via \cref{e:anotherbv}. 
Note that \cref{e:200705a} is equivalent to 
\begin{equation}
\label{e:200708a}
-\thalb\by - \bT\by \in N_{\overline{\bC+\bD}}^{-1}(\by).
\end{equation}
In this section, let us abbreviate 
\begin{empheq}[box=\mybluebox]{equation}
\label{e:defofbP}
\bP := P_{\overline{\bC+\bD}}, 
\end{empheq}
which is a projector and 
hence \emph{firmly nonexpansive}.
It follows that $\Id-\bP$ is also firmly nonexpansive, 
hence \emph{nonexpansive} ($1$-Lipschitz continuous). 
This allows us to 
rewrite \cref{e:200708a} as 
$\by \in N_{\overline{\bC+\bD}}(-\thalb\by-\bT\by)$
$\Leftrightarrow$
$\by + (-\thalb\by-\bT\by) \in 
(\Id + N_{\overline{\bC+\bD}})(-\thalb\by-\bT\by)$
$\Leftrightarrow$
$-\thalb\by-\bT\by = \bP(\thalb\by-\bT\by)$
$\Leftrightarrow$
$(\thalb\by-\bT\by)-\by=\bP(\thalb\by-\bT\by)$
$\Leftrightarrow$
\begin{equation}
\label{e:200805g}
  (\Id-\bP)(\thalb\by-\bT\by) = \by.
\end{equation}
Because we know already that $\Id-\bP$ is nonexpansive, 
we can solve \cref{e:200805g} by the Banach contraction principle 
as long as 
the inner operator
\begin{equation}
\label{e:200805h}
  \thalb\Id - \bT
\end{equation}
is a nice Banach contraction, i.e., Lipschitz continuous with a constant
strictly less than $1$!
We can determine the operator norm of \cref{e:200805h} by 
analyzing the corresponding matrix in $\RR^{m\times m}$. 
Recall that the singular values are the square roots of 
the (necessarily nonnegative) eigenvalues of 
the symmetric matrix associated with $(\thalb\Id-\bT)^*(\thalb\Id-\bT)$. 
The operator norm is the largest singular value. 
All this can be found using a symbolic algebra package such as 
\texttt{SageMath} (or \texttt{Maple} or \texttt{Mathematica}); 
see \cref{tab:table1} which provides the squared singular values (with multiplicity)
as well as the desired operator norm.

\renewcommand{\arraystretch}{1.5}
\begin{table}[h!]
  \begin{center}
    \begin{tabular}{lll}
      \toprule 
      $m$ & eigenvalues of $(\thalb\Id-\bT)^*(\thalb\Id-\bT)$ & $\|\thalb\Id - \bT\|$\\
      \midrule 
      2 & $\tfrac{1}{4}$ (twice)  & $\tfrac{1}{2} = 0.5$\\
      3 & $\tfrac{1}{3}$ (twice), $\tfrac{1}{4}$  & $\tfrac{1}{\sqrt{3}}\approx 0.58$  \\
      4 & $\tfrac{1}{2}$ (twice), $\tfrac{1}{4}$ (twice) & $\tfrac{1}{\sqrt{2}}\approx 0.71$\\
      5 & $\tfrac{1}{2}+\tfrac{1}{2\sqrt{5}}$ (twice), $\tfrac{1}{2}-\tfrac{1}{2\sqrt{5}}$ (twice), $\tfrac{1}{4}$ 
      & $\sqrt{\tfrac{1}{2}+\tfrac{1}{2\sqrt{5}}}\approx 0.85$ \\
      6 & $1$ (twice), $\tfrac{1}{3}$ (twice), $\tfrac{1}{4}$ (twice) & $1$ \\
      \bottomrule 
    \end{tabular}
    \caption{Computing the operator norm of \cref{e:200805h}.}
    \label{tab:table1}
  \end{center}
\end{table}

\renewcommand{\arraystretch}{1}

Therefore, when $m\leq 5$, then 
the fixed point equation \cref{e:200805g} can theoretically be solved 
by the Banach contraction mapping principle. 
(When $m\geq 7$, the operator norm $\|\thalb\Id - \bT\|$ appears to be always 
strictly larger than $1$.)
Unfortunately, we do not know of an explicit formula for the projector defined 
in \cref{e:defofbP}. In practice, one may appeal to 
\emph{Seeger's algorithm} \cite{Seeger} for computing $\bP$, 
which we record now: 

\begin{fact}{\bf (Seeger's algorithm)}
\label{f:Seeger}
Given 
\begin{subequations}
\begin{equation}
  \bx\in \bX, \;\text{and}\; \bd_0\in\bX, 
\end{equation}
generate sequences 
$(\bc_n)_{n\geq 1}$ and $(\bd_n)_{n\geq 1}$ iteratively via 
\begin{equation}
  \bc_n := P_\bC(\bx-\bd_{n-1}),\quad 
  \bd_n := P_\bD(\bx-\bc_n).
\end{equation}
Then 
\begin{equation}
  \bc_n+\bd_n \to P_{\overline{\bC+\bD}}(\bx) = \bP(\bx).
\end{equation}
\end{subequations}
\end{fact}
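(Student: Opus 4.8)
The plan is to unmask the two-step recursion as a single forward--backward (projected-gradient) iteration and then to exploit the fact that forward--backward drives objective values down to the infimum \emph{even when that infimum is not attained} --- which is exactly the delicate regime (no cycle) targeted by this paper.

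First I would eliminate $\bc_n$. Substituting $\bc_n=P_\bC(\bx-\bd_{n-1})$ into $\bd_n=P_\bD(\bx-\bc_n)$ gives $\bd_n=P_\bD\big(\bd_{n-1}+(\Id-P_\bC)(\bx-\bd_{n-1})\big)$. Introduce the Moreau-type function
\[
 h\colon\bX\to\RR\colon \bd\mapsto\tfrac12 d_{\bC}^2(\bx-\bd),
\]
which is convex and, by Moreau's theorem together with the chain rule, differentiable with $\nabla h(\bd)=-(\Id-P_\bC)(\bx-\bd)$; since $\Id-P_\bC$ is (firmly) nonexpansive, $\nabla h$ is $1$-Lipschitz. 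A direct check shows $\bd_n=P_\bD\big(\bd_{n-1}-\nabla h(\bd_{n-1})\big)$, so the recursion is precisely forward--backward with step $\alpha=1\le 1/L$ for the problem $\min_{\bd\in\bD}h(\bd)$. Moreover
\[
 \inf_{\bd\in\bD}h(\bd)=\tfrac12\inf_{\bd\in\bD}\inf_{\bc\in\bC}\|\bx-\bc-\bd\|^2=\tfrac12 d_{\overline{\bC+\bD}}^2(\bx).
\]

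Second, and this is the heart of the matter, I would prove the value-convergence $h(\bd_n)\to\inf_\bD h$. The standard forward--backward estimate (valid because $\alpha=1\le 1/L$) yields, for \emph{every} $\mathbf{u}\in\bD$,
\[
 h(\bd_{n+1})\le h(\mathbf{u})+\tfrac12\big(\|\bd_n-\mathbf{u}\|^2-\|\bd_{n+1}-\mathbf{u}\|^2\big);
\]
taking $\mathbf{u}=\bd_n$ shows $h(\bd_n)$ is nonincreasing, and telescoping then gives $h(\bd_N)\le h(\mathbf{u})+\|\bd_0-\mathbf{u}\|^2/(2N)$. Hence $\limsup_N h(\bd_N)\le h(\mathbf{u})$ for each $\mathbf{u}\in\bD$, so $\limsup_N h(\bd_N)\le\inf_\bD h$, while the reverse inequality is trivial. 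The crucial feature is that this argument uses only \emph{almost}-minimizers $\mathbf{u}$, which always exist; no actual minimizer --- equivalently, no cycle --- is needed.

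Finally I would convert value-convergence into convergence of the sum $\mathbf{s}_n:=\bc_n+\bd_n\in\bC+\bD\subseteq\overline{\bC+\bD}$. Because $\bd_n=P_\bD(\bx-\bc_n)$ beats $\bd_{n-1}\in\bD$, one gets $\tfrac12\|\bx-\mathbf{s}_{n}\|^2\le h(\bd_{n-1})$, while $\tfrac12\|\bx-\mathbf{s}_n\|^2\ge\tfrac12 d_{\overline{\bC+\bD}}^2(\bx)$ always; squeezing yields $\|\bx-\mathbf{s}_n\|\to d_{\overline{\bC+\bD}}(\bx)$. Writing $\mathbf{p}:=\bP(\bx)=P_{\overline{\bC+\bD}}(\bx)$, the projection inequality $\|\mathbf{s}_n-\mathbf{p}\|^2\le\|\mathbf{s}_n-\bx\|^2-\|\mathbf{p}-\bx\|^2$ then forces $\mathbf{s}_n\to\mathbf{p}$, which is the assertion. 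The main obstacle is precisely the non-attainment of the infimum (the absence of a cycle, as in \cref{sss:nocycles}): a naive alternating-minimization argument stalls there, since the individual sequences $(\bc_n)$ and $(\bd_n)$ may be unbounded and one cannot pass to the limit in the associated variational inequalities. Routing everything through the objective value of the \emph{single} variable $\bd$, together with the almost-minimizer estimate above, is what circumvents this difficulty.
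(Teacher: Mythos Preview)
Your argument is correct. Note, however, that the paper does not supply its own proof of this statement: it is recorded as a \emph{Fact} and attributed to Seeger~\cite{Seeger}, so there is no in-paper proof to compare against.

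That said, your route is worth commenting on. Seeger's original argument treats the recursion as alternating minimization of the bivariate objective $(\bc,\bd)\mapsto\tfrac12\|\bx-\bc-\bd\|^2$ over $\bC\times\bD$ and extracts convergence of the partial sums directly. You instead collapse the two-step scheme to a \emph{single-variable} forward--backward iteration $\bd_n=P_\bD(\bd_{n-1}-\nabla h(\bd_{n-1}))$ for $h(\bd)=\tfrac12 d_\bC^2(\bx-\bd)$, with step size exactly $1=1/L$. This buys you the standard proximal-gradient inequality $h(\bd_{n+1})\le h(\mathbf{u})+\tfrac12(\|\bd_n-\mathbf{u}\|^2-\|\bd_{n+1}-\mathbf{u}\|^2)$ for every $\mathbf{u}\in\bD$, from which value-convergence $h(\bd_n)\downarrow\inf_\bD h$ follows by telescoping against \emph{almost}-minimizers --- no actual minimizer (hence no cycle) is required, which is precisely the regime of interest in \cref{sss:nocycles}. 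The final passage from $\|\bx-\mathbf{s}_n\|\to d_{\overline{\bC+\bD}}(\bx)$ to $\mathbf{s}_n\to\bP(\bx)$ via the obtuse-angle inequality $\|\mathbf{s}_n-\bP(\bx)\|^2\le\|\mathbf{s}_n-\bx\|^2-\|\bP(\bx)-\bx\|^2$ is clean and standard. Altogether your proof is self-contained, uses only tools already available in the paper's framework (firm nonexpansiveness, Moreau's theorem, the projection characterization), and makes transparent why the result holds without assuming $\bC+\bD$ is closed.
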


\section{Finding the difference vectors by forward-backward}
\label{s:fb} 
In this section, we sketch another approach to numerically
compute the difference vectors. 
We begin by revisiting 
\cref{e:200705a} as a \emph{primal} problem: 

\begin{proposition}
\label{p:gencycle}
We interpret 
\begin{equation}
\label{e:200806a}
  0 \in N_{\overline{\bC+\bD}}^{-1}(\by) + \big(\thalb\Id +\bT\big)(\by), 
\end{equation}
which is \cref{e:200705a} and for which the solution 
$\by$ is unique, as an 
Attouch-Th\'era primal problem 
for the pair $(N_{\overline{\bC+\bD}}^{-1},\thalb\Id+\bT)$.
Then $\thalb\Id+\bT$ is $\thalb$-strongly monotone and 
$(\thalb\Id+\bT)^{-1}$ is $\thalb$-cocoercive.
Moreover, the Attouch-Th\'era dual problem of 
\cref{e:200806a} is 
\begin{equation}
\label{e:200723a}
  0 \in N_{\overline{\bC+\bD}}(\bx) + \big(\thalb\Id +\bT\big)^{-1}(\bx), 
\end{equation}
and the solution set of \cref{e:200723a} is the singleton 
\begin{equation}
\label{e:200806b}
  \{\be\}=\bD^\perp \cap \Fix(P_{\overline{\bC+\bD}}\bR). 
\end{equation}
\end{proposition}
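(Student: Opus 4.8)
The plan is to dispatch the four assertions in order, using the skewness of $\bT$ (\cref{e:Tskew}) and the explicit inverse \cref{e:200715b} as the main tools. For the strong monotonicity I would compute, for arbitrary $\bx_1,\bx_2\in\bX$,
\[
\scal{(\thalb\Id+\bT)\bx_1-(\thalb\Id+\bT)\bx_2}{\bx_1-\bx_2}
=\thalb\norm{\bx_1-\bx_2}^2+\scal{\bT(\bx_1-\bx_2)}{\bx_1-\bx_2},
\]
and kill the last term using skewness: for every $\bx\in\bX$ one has $\scal{\bT\bx}{\bx}=\scal{\bx}{\bT^*\bx}=-\scal{\bx}{\bT\bx}$, whence $\scal{\bT\bx}{\bx}=0$. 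This gives $\thalb$-strong monotonicity of $\thalb\Id+\bT$. Since $\thalb\Id+\bT$ is a continuous linear bijection (its inverse is exhibited in \cref{e:200715b}), the substitution $\bx_i\mapsto(\thalb\Id+\bT)^{-1}\bx_i$ turns this very inequality into the defining inequality for $\thalb$-cocoercivity of $(\thalb\Id+\bT)^{-1}$; this is the standard correspondence between $\mu$-strong monotonicity of an operator and $\mu$-cocoercivity of its inverse (see, e.g., \cite{BC2017}).

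For the dual problem I would apply Attouch-Th\'era duality (see \cite{BBHM}) to the pair $(N_{\overline{\bC+\bD}}^{-1},\thalb\Id+\bT)$. The dual of $0\in A\by+B\by$ is $0\in A^{-1}(\bx)-B^{-1}(-\bx)$; here $A^{-1}=N_{\overline{\bC+\bD}}$, and because $B=\thalb\Id+\bT$ is linear we have $-B^{-1}(-\bx)=B^{-1}(\bx)=(\thalb\Id+\bT)^{-1}(\bx)$ --- exactly the simplification already used to pass from \cref{e:ATprimal} to \cref{e:ATdual}. This yields \cref{e:200723a}.

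That $\be$ solves \cref{e:200723a} is read off from earlier work: \cref{e:200705b} gives $\by\in N_{\overline{\bC+\bD}}(\be)$, while the definition \cref{e:defofe} rearranges to $\be=(\thalb\Id+\bT)(-\by)$, i.e.\ $-\by=(\thalb\Id+\bT)^{-1}(\be)$; summing, $0\in N_{\overline{\bC+\bD}}(\be)+(\thalb\Id+\bT)^{-1}(\be)$. Uniqueness follows from a monotonicity-versus-cocoercivity clash: if $\bx_1,\bx_2$ both solve \cref{e:200723a}, monotonicity of $N_{\overline{\bC+\bD}}$ forces $\scal{(\thalb\Id+\bT)^{-1}\bx_1-(\thalb\Id+\bT)^{-1}\bx_2}{\bx_1-\bx_2}\leq0$, whereas cocoercivity forces it to be $\geq0$ (with an extra nonnegative term), so the two images under $(\thalb\Id+\bT)^{-1}$ coincide, and injectivity of that bijection gives $\bx_1=\bx_2$.

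Finally I would characterize the solutions lying in $\bD^\perp$. For $\bx\in\bD^\perp$, \cref{e:200705e} replaces $(\thalb\Id+\bT)^{-1}\bx$ by $(\Id-\bR)\bx$, so \cref{e:200723a} becomes $\bR\bx-\bx\in N_{\overline{\bC+\bD}}(\bx)$, i.e.\ $\bR\bx\in(\Id+N_{\overline{\bC+\bD}})(\bx)$, i.e.\ $\bx=P_{\overline{\bC+\bD}}(\bR\bx)$; equivalently $\bx\in\Fix(P_{\overline{\bC+\bD}}\bR)$. Hence $\bD^\perp\cap(\text{solution set})=\bD^\perp\cap\Fix(P_{\overline{\bC+\bD}}\bR)$, and since $\be\in\bD^\perp$ (\cref{e:defofe}) is the unique solution, both sides reduce to $\{\be\}$, proving \cref{e:200806b}. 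The step I expect to be most delicate is precisely this last identification: the equivalence ``solution $\Leftrightarrow$ fixed point'' holds only after restricting to $\bD^\perp$, because \cref{e:200705e} identifies $(\thalb\Id+\bT)^{-1}$ with $\Id-\bR$ only there; the restriction is legitimated by knowing in advance that the unique dual solution $\be$ sits in $\bD^\perp$, which is what prevents $\bD^\perp\cap\Fix(P_{\overline{\bC+\bD}}\bR)$ from being strictly larger than $\{\be\}$.
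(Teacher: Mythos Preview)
Your proposal is correct and follows essentially the same route as the paper: skewness of $\bT$ for strong monotonicity, the strong-monotonicity/cocoercivity correspondence for the inverse, linearity of $\thalb\Id+\bT$ to simplify the Attouch--Th\'era dual, and the formula \cref{e:200715b} (respectively its restriction \cref{e:200705e}) to convert the dual inclusion at a point of $\bD^\perp$ into the fixed-point condition $\bx=P_{\overline{\bC+\bD}}(\bR\bx)$. The only minor deviations are that the paper obtains uniqueness of the dual solution via the Attouch--Th\'era correspondence (the dual solution set equals the singleton $N_{\overline{\bC+\bD}}^{-1}(\by)\cap\{-\thalb\by-\bT\by\}$, citing \cite{BBHM}), whereas you argue directly from monotonicity of $N_{\overline{\bC+\bD}}$ versus cocoercivity of $(\thalb\Id+\bT)^{-1}$; and you are slightly more explicit than the paper about the reverse inclusion $\bD^\perp\cap\Fix(P_{\overline{\bC+\bD}}\bR)\subseteq\{\be\}$.
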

\begin{proof}
Because $\bT$ is skew (see \cref{e:Tskew}), 
it follows that $\thalb\Id+\bT$ is $\thalb$-strongly monotone.
By \cite[Example~22.7]{BC2017}, $(\thalb\Id+\bT)^{-1}$ is $\thalb$-cocoercive. 
Because $\bT$ is linear, the Attouch-Th\'era dual of 
\cref{e:200806a} with respect to the pair $(N_{\overline{\bC+\bD}}^{-1},\thalb\Id+\bT)$
is indeed \cref{e:200723a}.
We can pass from $\by$, the unique solution of \cref{e:200806a}, 
to the set of solutions of \cref{e:200723a} via 
$N_{\overline{\bC+\bD}}^{-1}(\by) \cap -\big(\thalb\Id +\bT\big)(\by)$ 
(see \cite[Proposition~2.4]{BBHM}). 
Because $\bT$ is single-valued, this implies that 
the \emph{unique} solution to \cref{e:200723a} is 
\begin{equation}
\label{e:200723b}
\bx = N_{\overline{\bC+\bD}}^{-1}(\by) \cap -\big(\thalb\Id +\bT\big)(\by)
= -\thalb\by - \bT\by = \be,
\end{equation}
where we used \cref{e:defofe} for the last equality. 
Now consider \cref{e:200723a} again. We rewrite this, 
using \cref{e:200723b}, \cref{e:200715b} and \cref{e:defofe} as 
\begin{equation}
  0 \in N_{\overline{\bC+\bD}}(\be) + \big(\Id-\bR+2P_{\Delta}\big)(\be)
  = N_{\overline{\bC+\bD}}(\be) + (\Id-\bR)(\be),
\end{equation}
or as 
$\be = P_{\overline{\bC+\bD}}(\bR\be) \in \overline{\bC+\bD}$
which yields \cref{e:200806b}. 
\end{proof}

\begin{theorem}
\label{t:fb}
Let $\gamma\in\zeroun$, let $\bx_0\in\bX$, and
generate a sequence $(\bx_n)_\nnn$ via 
\begin{subequations}
\begin{align}
\bx_{n+1} &= 
P_{\overline{\bC+\bD}}\big(\bx_n-\gamma(\thalb\Id+\bT)^{-1}\bx_n\big)\label{e:fb1}\\
&=
P_{\overline{\bC+\bD}}\big((1-\gamma)\bx_n+\gamma\bR\bx_n-2\gamma P_\bD\bx_n\big)\label{e:fb2}.
\end{align}
\end{subequations}
Then 
\begin{equation}
\label{e:fbe}
  \bx_n\to \be,
\end{equation}
\begin{equation}
\label{e:fby}
\bR\bx_n-\bx_n-2P_{\bD}\bx_n \to \by, 
\end{equation}
and 
\begin{equation}
\label{e:fbv}
\bR^*\bx_n-\bx_n\to\bv.
\end{equation}
\end{theorem}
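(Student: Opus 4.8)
The plan is to recognize the recursion \cref{e:fb1} as the \emph{forward--backward} iteration for the dual inclusion \cref{e:200723a} and to upgrade the generic (weak) forward--backward convergence to a genuine Banach-contraction argument. First I would record that, since $N_{\overline{\bC+\bD}}$ is a normal cone operator, its resolvent is the projector $P_{\overline{\bC+\bD}}$ for every positive step size, so \cref{e:fb1} is exactly $\bx_{n+1}=J_{\gamma N_{\overline{\bC+\bD}}}\big(\bx_n-\gamma(\thalb\Id+\bT)^{-1}\bx_n\big)$; its fixed points are the zeros of $N_{\overline{\bC+\bD}}+(\thalb\Id+\bT)^{-1}$ (here one uses that $N_{\overline{\bC+\bD}}(\bx)$ is a cone to absorb the factor $\gamma$), i.e.\ the single point $\be$ by \cref{p:gencycle} (see \cref{e:200806b}). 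The identity \cref{e:200715b} then turns the forward step $\bx_n-\gamma(\thalb\Id+\bT)^{-1}\bx_n$ into $(1-\gamma)\bx_n+\gamma\bR\bx_n-2\gamma P_\bD\bx_n$, which is precisely the equality of \cref{e:fb1} and \cref{e:fb2}.

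For the convergence \cref{e:fbe}, I would exhibit the forward--backward operator $\bG:=P_{\overline{\bC+\bD}}\circ(\Id-\gamma B)$, with $B:=(\thalb\Id+\bT)^{-1}$, as a strict contraction whenever $\gamma\in\zeroun$. Two ingredients feed into this. The first is already available: $B$ is $\thalb$-cocoercive (\cref{p:gencycle}). The second is that $B$ is also \emph{strongly} monotone; this is the crux, and it follows from the fact that the bounded linear, $\thalb$-strongly monotone operator $\thalb\Id+\bT$ is automatically cocoercive, since $\scal{(\thalb\Id+\bT)\bx}{\bx}\ge\thalb\norm{\bx}^2\ge\thalb\norm{\thalb\Id+\bT}^{-2}\norm{(\thalb\Id+\bT)\bx}^2$, so that its inverse $B$ is $\mu$-strongly monotone with $\mu=\thalb\norm{\thalb\Id+\bT}^{-2}>0$. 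Expanding $\norm{(\Id-\gamma B)\bx}^2=\norm{\bx}^2-2\gamma\scal{B\bx}{\bx}+\gamma^2\norm{B\bx}^2$, bounding $-2\gamma\scal{B\bx}{\bx}\le-\gamma\norm{B\bx}^2$ via cocoercivity, and using strong monotonicity together with linearity in the form $\norm{B\bx}\ge\mu\norm{\bx}$, I obtain $\norm{(\Id-\gamma B)\bx}^2\le\big(1-\gamma(1-\gamma)\mu^2\big)\norm{\bx}^2$, a contraction factor strictly below $1$ for every $\gamma\in\zeroun$. Since $P_{\overline{\bC+\bD}}$ is nonexpansive, $\bG$ is a contraction, and the Banach fixed point theorem yields $\bx_n\to\be$ (in fact linearly).

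Finally, \cref{e:fby} and \cref{e:fbv} are consequences of the continuity of the bounded linear operators involved. Rewriting the left-hand side of \cref{e:fby} as $\bR\bx_n-\bx_n-2P_\bD\bx_n=-(\Id-\bR+2P_\bD)\bx_n=-B\bx_n$ and passing to the limit gives $-B\be$; since $\be\in\bD^\perp$ forces $P_\bD\be=0$, combining \cref{e:200715b} with \cref{e:200705f} gives $-B\be=\bR\be-\be=\by$, which is \cref{e:fby}. Likewise $\bR^*\bx_n-\bx_n\to\bR^*\be-\be=\bv$ by continuity of $\bR^*$ and the definition \cref{e:defbv}, giving \cref{e:fbv}. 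The main obstacle is the passage from weak to strong convergence: a direct invocation of a forward--backward theorem only guarantees weak convergence in a general Hilbert space, so the essential step is establishing the strong monotonicity of $B$ (equivalently, the cocoercivity of $\thalb\Id+\bT$), which, married to its cocoercivity, makes the iteration a bona fide contraction over the entire range $\gamma\in\zeroun$.
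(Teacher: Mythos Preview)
Your argument is correct and takes a genuinely different route from the paper. The paper applies a standard forward--backward convergence theorem (\cite[Theorem~26.14]{BC2017}) to the pair $(\bA,\bB)=(N_{\overline{\bC+\bD}},(\thalb\Id+\bT)^{-1})$, which delivers \emph{weak} convergence $\bx_n\weakly\be$ together with \emph{strong} convergence $\bB\bx_n\to-\by$; the upgrade to $\bx_n\to\be$ then comes from the one-line observation $\bx_n=(\thalb\Id+\bT)\bB\bx_n\to(\thalb\Id+\bT)(-\by)=\be$, using only continuity of the bounded linear operator $\thalb\Id+\bT$. You instead show directly that the forward--backward map is a Banach contraction: you supplement the $\thalb$-cocoercivity of $\bB$ with its $\mu$-strong monotonicity (obtained from the cocoercivity of the bounded linear operator $\thalb\Id+\bT$, with $\mu=\thalb\|\thalb\Id+\bT\|^{-2}$), and combine the two to bound $\|\Id-\gamma\bB\|^2\le 1-\gamma(1-\gamma)\mu^2<1$. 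Your approach is more self-contained and yields an explicit linear rate, at the cost of invoking the extra ingredient $\|\thalb\Id+\bT\|$; the paper's approach leans on off-the-shelf forward--backward theory and avoids this quantity entirely, trading the rate for a shorter argument. Your derivations of \cref{e:fby} and \cref{e:fbv} via continuity of $\bB$ and $\bR^*-\Id$ match the paper's in spirit. One remark: your closing sentence suggests that the contraction argument is \emph{the} essential step needed for strong convergence, but the paper's composition trick $\bx_n=(\thalb\Id+\bT)\bB\bx_n$ is an equally valid (and arguably simpler) mechanism, so you might soften that claim.
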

\begin{proof}
Set $\bA := N_{\overline{\bC+\bD}}$.
Also set 
$\bB :=(\thalb\Id+\bT)^{-1}$, which is $\beta$-cocoercive, with $\beta = \thalb$, 
by \cref{p:gencycle}.
Then $\gamma \in \left]0,2\beta\right[$. 
Now set $\delta := 2-\gamma/(2\beta) = 2-\gamma>1$ and 
$\lambda := \lambda_n\equiv 1$.
Then $\lambda_n(\delta-\lambda_n) \equiv \delta-1>0$
and thus $\sum_{n\in\NN}\lambda_n(\delta-\lambda_n)=+\infty$. 
We now apply \cite[Theorem~26.14]{BC2017} on the forward-backward algorithm 
applied to the problem \cref{e:200723a}. 
Note that \cref{e:fb1} is precisely the forward-backward algorithm 
with the parameters just defined because of \cite[Remark~26.15]{BC2017}
and $\bx_{n+1} = J_{\gamma\bA}(\bx_n-\gamma \bB\bx_n)$. 
The alternative formula \cref{e:fb2} follows from \cref{e:200715b}.
Using \cite[Theorem~26.14(i)\&(ii)]{BC2017} and \cref{p:gencycle},
we have $\bx_n\weakly \be$ and 
\begin{equation}
\label{e:fb3}
  \bB\bx_n=\big(\thalb\Id+\bT\big)^{-1}\bx_n \to 
  \big(\thalb\Id+\bT\big)^{-1}\be = -\by.
\end{equation}
(The fact that $\bB\bx_n\to-\by$ and not $\by$ stems from the fact 
that the dual problem in \cite[Chapter~26]{BC2017} differs from the one 
in this paper by a negative sign.)
Now \cref{e:fb3} and \cref{e:200715b} yield \cref{e:fby}. 
Next, \cref{e:fb3} and the fact that $\bT$ is continuous and single-valued yields 
\begin{equation}
\label{e:fb4}
  \bx_n=\big(\thalb\Id+\bT\big)\big(\thalb\Id+\bT\big)^{-1}\bx_n \to 
  \big(\thalb\Id+\bT\big)\big(\thalb\Id+\bT\big)^{-1}\be = \be
\end{equation}
and so \cref{e:fbe} is verified. 
Finally, to check \cref{e:fbv}, apply the continuous operator $\bR^*-\Id$ to \cref{e:fb4} and 
recall \cref{e:defbv}.
\end{proof}


\begin{remark} 
\cref{t:fb} is a powerful result for computing 
$\be,\by,\bv$ as \emph{strong} limits of sequence.
As in \cref{s:Banach}, the numerical difficulty lies 
in the computation of $P_{\overline{\bC+\bD}}$;
however, Seeger's algorithm (see \cref{f:Seeger})
may be used to approximate this projection.
\end{remark}

\begin{remark}
\cref{t:fb} allows for flexibility of the parameter 
$\gamma\in\zeroun$.
Perhaps the most natural choice is 
\begin{equation}
  \gamma = \frac{1}{2};
\end{equation}
however, let us point out an intriguing other choices, namely 
\begin{equation}
  \gamma = \frac{m}{m+2}.
\end{equation}
With the latter choice and \cref{e:JulianPbD}, the inner (forward) operator in 
\cref{e:fb2} 
turns into 
\begin{subequations}
\label{e:crazy}
\begin{align}
(1-\gamma)\Id + \gamma\bR-2\gamma P_\bD
&= 
\frac{2}{m+2}\Id + \frac{m}{m+2}\bR -\frac{2m}{m+2}P_\bD
\\ 
&= 
\frac{2}{m+2}\Id + \frac{m}{m+2}\bR -\frac{2m}{m+2}\frac{1}{m}\sum_{k=0}^{m-1}\bR^k\\
&= 
\frac{m-2}{m+2}\bR
-\frac{2}{m+2}\sum_{k=2}^{m-1}\bR^k,
\end{align}
which is Lipschitz continuous with constant 
$3(m-2)/(m+2)$ because $\bR$ is an isometry. 
\end{subequations}
We point out the cases when $m=2$ and $m=3$,
for which $\gamma=1/2$ and $\gamma=3/5$ respectively, 
and \cref{e:crazy} turns into 
\begin{equation}
\label{e:crazy2}
\text{\rm \big[$m=2$ and $\gamma=\thalb$\big]}
\;\;\Rightarrow\;\;
(1-\gamma)\Id + \gamma\bR-2\gamma P_\bD \equiv 0
\text{\rm ~is $0$-Lipschitz }
\end{equation}
and 
\begin{equation}
\label{e:crazy3}
\text{\rm \big[$m=3$ and $\gamma=\tfrac{3}{5}$\big]}
\;\;\Rightarrow\;\;
(1-\gamma)\Id + \gamma\bR-2\gamma P_\bD = \tfrac{1}{5}\bR-\tfrac{2}{5}\bR^2
\text{\rm ~is $\tfrac{3}{5}$-Lipschitz.}
\end{equation}
Note that \cref{e:crazy2} looks at first puzzling because 
then \cref{e:fb2} turns into 
$\bx_{n+1} = P_{\overline{\bC+\bD}}(0)$
and so \cref{e:fbe} yields $\be = P_{\overline{\bC+\bD}}(0)$;
however, we already observed this directly in \cref{e:200805a}. 
\end{remark}

\section{Conclusion and future work}

\label{s:done}

Using the framework of monotone operator theory,
we resolved the geometry conjecture completely. 
We obtained alternative descriptions of the set of cycles $\bZ$. 
We also sketched numerical approaches for the computation 
of the difference vector $\bv$ by using Seeger's algorithm.

Turning to future research, it is desirable to 
devise algorithms for computing $\bv$ without having to employ
Seeger's algorithm. Moreover, it is interesting to extend 
the results in this paper from projectors to (underrelaxed) projectors 
or even proximal mappings. We have taken steps in this direction, 
and initial progress appears to be quite promising \cite{ABWprox}.

\section*{Acknowledgments}
The authors thank two referees and the editor for 
careful reading, thoughtful comments, and suggestions 
which significantly improved the presentation of the results.
HHB and XW are supported by the Natural Sciences and
Engineering Research Council of Canada.

\small

\end{document}